 \theoremstyle{theorem}
 \newtheorem{theorem}{Theorem}[section]
\newtheorem{proposition}[theorem]{Proposition}
\newtheorem{lemma}[theorem]{Lemma}
\newtheorem{corollary}[theorem]{Corollary}
\theoremstyle{definition}
\newtheorem{algo}[theorem]{Algorithm}
\newtheorem{remark}[theorem]{Remark}
\newenvironment{example}
{\pushQED{\qed}\examplex}
{\popQED\endexamplex}
\newtheorem{question}[theorem]{Question}
\newcommand{\PP}{\mathbb{P}}
\newcommand{\RR}{\mathbb{R}}
\newcommand{\R}{\mathbb{R}}
\newcommand{\QQ}{\mathbb{Q}}
\newcommand{\CC}{\mathbb{C}}
\newcommand{\C}{\mathcal{C}}
\newcommand{\D}{\mathcal{D}}
\title{\bf Maximal Mumford Curves \\ from Planar Graphs}
\author{Mario Kummer, Bernd Sturmfels and Raluca Vlad}
\date{}
\begin{document}
\maketitle

\begin{abstract}
\noindent
A curve of genus $g$ is maximal Mumford (MM) if it has $g+1$ ovals
 and $g$ tropical~cycles.
We construct full-dimensional families of MM curves in 
the Hilbert scheme of canonical curves.
This rests on first-order deformations of graph curves
whose graph is planar.
 \end{abstract} 
 
 \section{Introduction}

In real projective geometry, an algebraic curve $C$ 
consists of ovals, each topologically a circle.
By Harnack's Theorem, the number of
ovals is at most $g+1$, where $g$ is the genus of $C$.
If equality holds, then $C$ is a {\em maximal curve} or {\em M-curve}
\cite{AG, Viro}.
In non-archimedean geometry, a curve $C$ is tropicalized to
a metric graph, known as the
Berkovich skeleton.
The number of cycles in this graph is at most $g$.
If equality holds, then $C$ is a {\em Mumford~curve}~\cite{Bra, Jell, MR}.

This paper concerns curves that attain both bounds simultaneously.
These are defined over a non-archimedean real closed field,
such as the field  $ \RR\{\!\{ \epsilon \} \!\}$ of real Puiseux series.
A smooth algebraic curve over this field is an {\em MM-curve} if it is both
maximal and Mumford.

There is a well-known method, due to Viro \cite{Viro}, for constructing
MM-curves in the plane or, more generally, in toric surfaces. {\em Viro's method} starts with signs
for the lattice points in the Newton polygon and it identifies
regular unimodular triangulations whose associated real tropical curve
has $g+1$ ovals. This has been
highly successful in the study of Hilbert's 16th problem.
Another construction of MM-curves, starting from plabic graphs, was introduced by
Abenda and Grinevich \cite{AG19, AG} in the context of the integrable systems.
One limitation of all planar constructions is that 
plane curves are special. The planar locus
in the moduli space  $\mathcal{M}_g$ is low-dimensional.
Another drawback of Viro curves is that the
coefficients of their equations, when specialized to $\RR$,
 are extremely large or
extremely small. This makes them impractical for
numerical  algebraic geometry.
 Viro's method extends to  complete intersections, as shown in \cite{Stu},
 but the resulting curves suffer from similar shortcomings.

In this article we present a new algebraic construction of MM-curves.
This offers a fresh perspective. Our MM-curves are canonical curves
in $\PP^{g-1}$, obtained by deforming graph curves. This works if and only if the
underlying graph is planar, and it yields full-dimensional loci in $\mathcal{M}_g$.
In  the following examples we display some equations with friendly coefficients.
We used the software {\tt Bertini} \cite{bertini} to
verify the number of ovals for
(\ref{eq:genus3}), (\ref{eq:genus4}), (\ref{eq:genus5}) and
(\ref{eq:limitgen6}).

\begin{example}[$g=3$] \label{ex:g3} Canonical curves of genus three are plane quartics. The equation
\begin{equation}
\label{eq:genus3} xyz(x+y+z) \, = \, \epsilon \cdot f(x,y,z), 
\end{equation}
defines a smooth MM-curve if and only if
$f$ is positive at the six points
$(1{:} 0{:}0)$, $(0{:}1{:}0)$, $(0{:}0{:}1)$,
$(1{:}{-}1{:}0)$, $(1{:}0{:}{-}1)$ and $(0{:}1{:}{-}1)$ in $\PP^2(\RR)$.
Here $f$ is a quartic with real coefficients. 
 For instance, the Fermat quartic $f = x^4 + y^4 + z^4$ yields an MM-curve.
For $\epsilon = 0$, equation (\ref{eq:genus3})
defines the graph curve of the complete graph $K_4$,
which has the six nodes listed above.
\end{example}

\begin{example}[$g=4$]  \label{ex:g4}
The following canonical curve of genus four in $\PP^3$ is an MM-curve:
\begin{equation}
\label{eq:genus4}
 (x\!+\!y\!+\!z\!+\!w)w \,-\, \epsilon \cdot (xy+xz+yz) \,\,\, = \,\,\,  xyz  \,  -\, \epsilon\cdot (x^3+y^3+z^3) \,\,\, = \,\,\, 0. 
\end{equation}
This curve is Mumford because the special fiber for $\epsilon = 0$ consists of six lines.
The tropical curve is the edge graph of the triangular prism. The curve is maximal
because it has five ovals for $0 \!<\! \epsilon\! <\! 0.08860579084$. This is
the smallest positive root of the discriminant.
\end{example}

\begin{example}[$g=5$] \label{ex:g5} The following canonical curve of genus five in $\PP^4$ is an MM-curve:
\begin{equation}
\label{eq:genus5}
x y \, - \, \epsilon \cdot (z^2 + v^2) \,\, = \,\,  zv \, - \, \epsilon \cdot (x^2 + y^2)
\,\,= \,\,  w (x+y+z+v+w) \, - \,\epsilon \cdot (x+y)(z + v) \,\, = \,\, 0 .\qquad
\end{equation}
The curve is maximal: it has $6$ ovals.
It is also Mumford: the tropical curve has $5$ loops.
This curve is not trigonal. By contrast, all planar curves 
land in the trigonal  divisor of~$\mathcal{M}_5$.
\end{example}

We now state the main result of this paper. 
As in \cite{BE} and \cite[Section 5]{GHSV}, we fix
a trivalent $3$-connected simple graph $G$ of genus $g$.
The associated  {\em graph curve} $C_G $
consists of $2g-2$ lines  in $\PP^{g-1}$, one for each vertex of $G$.
It has $3g-3$ nodes, one for each edge of~$G$.
The special fibers seen above
are graph curves for $g=3,4,5$.
The graph $G$ in (\ref{eq:genus5})  is the $3$-cube.

\begin{theorem} \label{thm:main}
An MM-curve in $\PP^{g-1}$ with special fiber $C_G$ exists 
if and only if $G$ is planar. 
The locus of such MM-curves
is full-dimensional in the Hilbert scheme of canonical curves.
Every first order deformation from an open cone, isomorphic to 
$\RR^{g^2-1} \times \RR^{3g-3}_{> 0}$,
in the tangent space of the Hilbert scheme at $C_G$ lifts to an MM-curve.
And, every such lift is an MM-curve.
\end{theorem}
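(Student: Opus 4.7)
The plan is to combine deformation theory of the graph curve $C_G$ with a ribbon-graph analysis of the real topology of smoothings. First I would describe the tangent space $T_{[C_G]}\mathrm{Hilb} \cong H^0(C_G, N_{C_G/\PP^{g-1}})$ via the local-to-global spectral sequence: it splits into a $(g^2-1)$-dimensional equisingular part (first-order deformations of the $2g-2$ lines preserving the combinatorial type $G$, induced by the $PGL_g$-action) and a $(3g-3)$-dimensional smoothing part with one parameter per node, contributed by $\mathrm{Ext}^1(\Omega_{C_G},\mathcal{O}_{C_G})$ localized at the nodes. Each smoothing parameter carries an intrinsic sign via the local model $xy = \beta\epsilon$, determining the positive orthant $\RR^{3g-3}_{>0}$. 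Since the Hilbert scheme is smooth at $[C_G]$ for trivalent $3$-connected $G$, as in \cite{BE}, every first-order deformation lifts uniquely to a formal family over $\RR\{\!\{\epsilon\}\!\}$. The sum $(g^2-1)+(3g-3) = g^2+3g-4$ matches the dimension of the Hilbert scheme of canonical genus-$g$ curves, yielding the full-dimensionality claim.

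For the Mumford property: for any positive-smoothing lift, the dual graph of the special fiber $C_G$ is $G$, of first Betti number $g$. The Berkovich skeleton of the generic fiber is therefore $G$ as a metric graph, with edge lengths equal to the valuations of the smoothing parameters, and its first Betti number $g$ equals the geometric genus of the generic fiber. So every lift in the cone is automatically Mumford, independently of the equisingular factor.

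The central step is the \emph{maximal iff planar} claim. Topologically, $C_G(\RR)$ is a $1$-complex of $2g-2$ copies of $\RR\PP^1$ glued at $3g-3$ transverse crossings; a positive smoothing at a node ($xy = \beta\epsilon$ with $\beta\epsilon > 0$) replaces the crossing by two disjoint arcs whose connection pattern is determined by the cyclic order of the three nodes lying on each incident line. Since first-order deformations do not alter these cyclic orders, the entire positive cone induces the same ribbon graph structure on $G$ as $C_G$ itself, and the smoothed real curve is homeomorphic to the boundary of the associated ribbon surface. By Euler's formula, a ribbon graph with $V = 2g-2$ vertices and $E = 3g-3$ edges on a closed orientable surface of genus $h$ has exactly $F = g+1-2h$ faces, so the number of ovals is at most $g+1$, with equality iff $h=0$. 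Necessity of planarity is immediate: if $G$ is non-planar, no ribbon graph on $G$ has $h=0$. For sufficiency, when $G$ is planar — essentially uniquely so up to reflection, by Whitney's theorem for $3$-connected graphs — I would verify that the canonical ribbon structure induced by $C_G \subset \PP^{g-1}$ matches the planar embedding, using the freedom in the $(g^2-1)$-dimensional $PGL_g$ factor to arrange the cyclic orders of nodes on each line correctly.

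The main anticipated obstacle is precisely this last compatibility between the geometric configuration forced by the canonical embedding and the combinatorial planar embedding of $G$: one must show that a suitable realization of $C_G$ (obtained by moving within the equisingular orbit) does produce the planar ribbon structure rather than one of the many non-planar ribbon structures on the same abstract graph. Once this is settled, every lift from the open cone is both maximal and Mumford, hence an MM-curve, and conversely the existence of such a lift forces $G$ to be planar.
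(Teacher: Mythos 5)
Your overall architecture matches the paper's: smoothness of the Hilbert scheme at $[C_G]$, a splitting of the tangent space into a $(g^2-1)$-dimensional $\mathrm{PGL}(g)$ part and a $(3g-3)$-dimensional part with one smoothing coordinate per node, the Mumford property read off from the dual graph $G$, and an Euler-characteristic/Mac Lane argument tying maximality to planarity (the paper's Theorem~\ref{thm:main34}). But there is a genuine gap at precisely the step you flag as the main obstacle, and the fix you propose cannot work. The connection pattern of the four real branches at a smoothed node is \emph{not} determined by the cyclic order of the three nodes on each incident line (three points on a circle $\RR\PP^1$ carry no cyclic-order information); it is determined by the sign of the smoothing coordinate at that node, suitably normalized, and as these signs range over all $2^{3g-3}$ orthants one obtains all $2^{3g-3}$ edge pairings of $G$ --- most of which do not arise from any ribbon structure and yield fewer than $g+1$ ovals. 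So ``the positive orthant'' is not well defined until you pin down, at each node $p_e$, a normalization of the local equation $f\cdot l_1\cdot l_2$ against the actual positions of the neighbouring nodes on the two lines; this is exactly what the paper does in Lemma~\ref{lem:welldefined}, in (\ref{eq:bigP1})--(\ref{eq:bigP3}), and in Proposition~\ref{prop:match}, which verifies that the stated sign condition at $p_e$ forces the stated pairing. Your proposed escape --- using the $(g^2-1)$-dimensional $\mathrm{PGL}(g)$ factor to ``arrange the cyclic orders'' --- is unavailable: those directions are tangent to the orbit of the singular curve $C_G$, hence lie in every hyperplane $h_e$, so moving within that factor cannot change which side of any $h_e$ a deformation lies on, i.e., cannot change the induced edge pairing.

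Two smaller gaps. First, smoothness of the lifted curve over $R$ is not automatic from nonvanishing of the first-order smoothing coordinates; the paper's Theorem~\ref{thm:makeMumford} proves it by a valuation-theoretic Jacobian computation. Your ``lifts uniquely to a formal family'' is also off: lifts are far from unique, one needs algebraic power series as in Lemma~\ref{lem:smoothtangent} to land in $H_g(\mathcal{O})$, and one needs Theorem~\ref{thm:trivial} to transfer oval counts between the real curves $X_t(\RR)$ and the curve over $R$. Second, your face count $F=g+1-2h$ presupposes an orientable ribbon surface, whereas a general edge pairing glues to a possibly non-orientable closed surface; the bound $F\leq g+1$ and the implication ``$F=g+1$ forces the sphere, hence planarity'' still follow from $\chi=V-E+F$ together with the nonexistence of non-orientable surfaces with $\chi=2$, as in the paper's proof of Theorem~\ref{thm:main34}, but your argument as written does not cover this case.
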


We explain this theorem for  the $3$-cube graph in 
Example \ref{ex:g5}. Here the Hilbert scheme has dimension $36$.
Its quotient by the $24$-dimensional group
${\rm PGL}(5)$ gives rise to the $12$-dimensional moduli space $\mathcal{M}_5$.
We consider the family of canonical curves  which is given~by
\begin{equation}
\label{eq:genus5b}
 xy\,=\,\epsilon \cdot f \, , \,\,\, zv \,=\, \epsilon \cdot g \, , \,\,\,
w(x{+}y{+}z{+}v{+}w) \,=\, \epsilon \cdot h. 
\end{equation}
The coefficients of the three trailing quadrics furnish $36$ parameters for the Hilbert scheme:
$$
 \begin{small} \begin{matrix}
  f & = & a_1 x^2 + a_2 x z+a_3 x v+a_4 y^2+a_5 yz+a_6 yv+a_7 y w+a_8 z^2
  +a_9 z w+a_{10} v^2 + a_{11} v w + a_{12}w^2 ,\\
g &=& b_1 x^2+b_2 x z+b_3 x v+b_4 y^2+b_5 yz+b_6 yv+b_7 y w+b_8 z^2
  +b_9 z w+b_{10} v^2+b_{11}v w+b_{12}w^2 ,\\
h & = & c_1 x^2+c_2 x z+c_3 x v+c_4 y^2+c_5 yz+c_6 yv+c_7 y w+c_8 z^2
  +c_9 z w+c_{10} v^2+c_{11}v w+c_{12}w^2 .
  \end{matrix} \end{small}
  $$
  Our open cone $\RR^{24} \times \RR^{12}_{> 0}$ 
  is defined by the following $12$ linear inequalities in these parameters:
   \begin{equation}
 \label{eq:12ineqs}
  \begin{matrix} \!\!\!\!\!\!\!\!\!
a_8\,,  \,a_8-a_9+a_{12}\,, \,a_{10} \,, \,a_{10}-a_{11}+a_{12}\,, \,\,\,
  b_1\,,\, b_1+b_{12}\,, \,
 b_4\,, \,b_4-b_7+b_{12},\\ 
 \qquad  \quad
 -c_4+c_6 -c_{10}\,,\, \,-c_4+c_5-c_8\,, \,\,-c_1+c_3 -c_{10}\,, \,\,-c_1+c_2-c_8 \,\,\,
 > \,\,\, 0 .
\end{matrix}     
\end{equation}
Since the curve is a complete intersection, we locally
identify the Hilbert scheme at $C_G$ with its tangent space $\RR^{36}$.
Theorem \ref{thm:main} tells us
that (\ref{eq:genus5b}) is an MM-curve whenever (\ref{eq:12ineqs}) holds.

We note that the $12$ linear forms in (\ref{eq:12ineqs}) are relevant not just over $\RR$, but
 over any field $K$.
If they are non-zero then the curve is smooth,
and it is a Mumford curve if ${\rm val}_K(\epsilon) > 0$.

\smallskip

This paper in organized as follows. In Section \ref{sec2} we review non-archimedean real geometry,
starting with the identification in Theorem~\ref{thm:germs}
of algebraic Puiseux series with germs of curves.
We focus on the topology of real curves and their deformations.
Proposition \ref{prop:genusone} characterizes 
the mixed semialgebraic set of MM-curves in genus one.
Section \ref{sec3} is devoted to graph curves and their neighborhood in the 
Hilbert scheme of canonical curves. We prove in Theorem~\ref{thm:main34}
that this neighborhood contains an MM-curve if and only if the graph is planar.
This rests on graph theory results from
the 1930's, due to Whitney and MacLane~\cite{MacLane1937}.

In Section \ref{sec4} we introduce an arrangement of
$3g-3$ hyperplanes in the tangent space of the
Hilbert scheme at a graph curve. Each region of this arrangement
is a cone $\RR^{g^2-1} \times \RR_{>0}^{3g-3}$, and it parametrizes
first-order deformations which lift to smooth Mumford curves (Theorem~\ref{thm:makeMumford}).
The proof of Theorem \ref{thm:main} is completed in Section \ref{sec5}.
Here we construct MM-curves, using
 the combinatorial model of real Mumford curves
via edge pairings given in Proposition~\ref{prop:determined}.
 Putting our theory of MM-curves into practice
 is the aim of Section \ref{sec6}.
 We discuss computations for planar graphs~$G$ 
 whose curves $C_G$ are not complete intersections.

\section{Real Algebraic Geometry with Valuations}
\label{sec2}

In this section we present basics in
non-archimedean real algebraic geometry \cite{AGS, Jell, JSY}.
Many  constructions and results in this paper make no reference to the order of the field.
Such results will be valid over any field with non-trivial valuation, such as
the $p$-adic numbers~$\QQ_p$.

But, in order to introduce MM-curves, we need a valued field $R$ that is also ordered.
  We must assume that the valuation
 ${\rm val}$ is {\em convex} with respect to the
order. This means~that
$$  0 \leq u \leq v \,\,\,{\rm and} \,\,\, v \in \mathcal{O}
\quad {\rm implies} \quad u \in \mathcal{O}. $$
Here $\mathcal{O}$ denotes the valuation ring.
Also, we want
 ${\rm val}$ to be non-trivial in the sense
that there exists a field element $\epsilon > 0$ with ${\rm val}(\epsilon) = 1$.
This ensures that the value group  contains $\QQ$.
The familiar real closed field $ \RR\{\!\{ \epsilon \} \!\}$ of Puiseux series 
satisfies all these desiderata.

We here prefer to work over a subfield, namely 
the field of \emph{algebraic} Puiseux series:
\begin{equation*}
 R\,:=\,\R\langle\epsilon\rangle\,=\,\bigl\{\,f\in\R\{\!\{ \epsilon \} \!\}
 \, \,:\,\, f\textrm{ is algebraic over }\R(\epsilon) \bigr\}.
\end{equation*}
The field $R$ is the real closure of the field of rational functions $\R(\epsilon)$,
 with respect to the ordering where $\epsilon$ is positive but smaller than all positive reals. It has 
the natural $\epsilon$-adic valuation ${\rm val}\colon R^*\to\R$.
The valuation ring is $\mathcal{O} \,:= \, \{ f \in R \,:\,{\rm val}(f) \geq 0\}$, 
with maximal ideal $\mathcal{M} \,:= \, \{ f \in R \,:\,{\rm val}(f) > 0\}$, 
and the residue field $\mathcal{O}/\mathcal{M}$ is the field $\RR$ of real numbers.

Note that $R$ contains the rational function field $\QQ(\epsilon)$.
We can perform computations
in $\QQ(\epsilon)$. The points of
our varieties have their coordinates in~$R$.
It is often desirable to pass to $\RR$ by
replacing  $\epsilon$ with small positive reals.
This is possible for the field $R$, but not for $\R\{\!\{ \epsilon \} \!\}$.
Indeed, our field $R$ has the following key properties.
See \cite[Section 3.3]{basuetal} for proofs.

\begin{theorem} \label{thm:germs}
Let $\,\alpha\colon (0,1)\to\R$ be a continuous semialgebraic function.   \vspace{-0.13cm}
  \begin{enumerate}
   \item  There exists a real number $c \in (0,1)$ and a unique algebraic Puiseux series $f_{\alpha}\in R$ which converges on the interval $(0,c)$ and whose value agrees with $\alpha$ on $(0,c)$.
   \vspace{-0.13cm}
      \item If the limit $\,\lim_{t\to 0} \alpha(t)\,$ exists in $\RR$, then  the series
            $f_{\alpha}$ lies in the valuation ring $\mathcal{O}$.   \vspace{-0.1cm}
   \item Every $f\in R$ equals $f_{\alpha}$ for some continuous semialgebraic function $\alpha\colon (0,1)\to\R$.   
    \vspace{-0.13cm}
   \item If $\alpha_1,\alpha_2\colon (0,1)\to\R$ are continuous semialgebraic functions with $f_{\alpha_1}=f_{\alpha_2}$, then there exists a real number $c \in (0,1)$ such that $\alpha_1(t)=\alpha_2(t)$ for all 
   $t$ in the interval $(0,c)$.
  \end{enumerate}
\end{theorem}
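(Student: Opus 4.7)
The plan is to establish a bijection between germs at $0^+$ of continuous semialgebraic functions and algebraic Puiseux series, using the Tarski--Seidenberg/cell decomposition principle on one side and the Newton--Puiseux theorem on the other. The key observation is that if $\alpha\colon(0,1)\to\R$ is continuous and semialgebraic, then after shrinking the domain to some $(0,c)$ its graph lies on a single analytic branch of an algebraic curve $P(t,y)=0$ with $P\in\R[t,y]$ nonzero, and this branch is precisely encoded by an algebraic Puiseux series.

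For part (1), the graph of $\alpha$ is a semialgebraic subset of $\R^2$, so by cell decomposition there exists $c\in(0,1)$ and a nonzero $P\in\R[t,y]$ vanishing on the graph over $(0,c)$. The Newton--Puiseux theorem yields finitely many fractional power series branches of $P(t,y)=0$ near $t=0$, convergent on a punctured disc; continuity of $\alpha$ and connectedness of $(0,c)$ force $\alpha$ to coincide with exactly one such branch, giving $f_\alpha\in R$ (it is algebraic over $\R(\epsilon)$ via $P(\epsilon,f_\alpha)=0$). Uniqueness follows because two convergent Puiseux series that agree on an interval must have identical coefficients. For part (3), an algebraic Puiseux series $f$ is a branch of the polynomial equation it satisfies, hence converges on some punctured disc; its restriction to $(0,c)$ is semialgebraic (graph inside an algebraic curve), and I would extend it continuously to $(0,1)$ by, e.g., a constant on $[c,1)$.

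Part (2) is immediate once one writes $f_\alpha=c_0+c_{i_1}\epsilon^{i_1/N}+\cdots$: if $i_1>0$ then $\lim_{t\to 0^+}\alpha(t)=c_0\in\R$, while a negative-exponent term would force $|\alpha(t)|\to\infty$, so the limit existing in $\R$ forces $\mathrm{val}(f_\alpha)\geq 0$, i.e.\ $f_\alpha\in\mathcal{O}$. Part (4) combines (1) and (3): since $f_{\alpha_1}=f_{\alpha_2}$ is a single convergent series agreeing with each $\alpha_i$ on some $(0,c_i)$, the two functions agree on the intersection. The main obstacle is the convergence of algebraic Puiseux series, which is not a purely formal Newton polygon statement but requires the analytic theory of algebraic function branches (or analytic continuation along paths in a slit disc) to supply a uniform radius of convergence; a secondary technical point is ensuring that cell decomposition delivers a single polynomial equation $P$ capturing the germ of $\alpha$, rather than just a semialgebraic description, which one obtains by selecting an irreducible factor cutting out the cell containing the graph.
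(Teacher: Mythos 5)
The paper does not prove this theorem in-text; it simply cites \cite[Section 3.3]{basuetal}, and your argument is a correct reconstruction of the standard proof given there: a continuous semialgebraic function of one variable satisfies a nonzero polynomial relation $P(t,\alpha(t))=0$, Newton--Puiseux supplies the finitely many (possibly negative-exponent, real-coefficient) branches, and continuity pins $\alpha$ to one of them on a small interval. You also correctly identify the one genuinely nontrivial analytic ingredient, namely that a Puiseux series algebraic over $\R(\epsilon)$ converges near $0$, which is exactly what the cited reference supplies; no gap.
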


Let $X$ be a quasi-projective scheme over the field $R$. Consider any
point $\nu \in X(\mathcal{O})$. This is a
morphism $\nu\colon{\rm Spec}(\mathcal{O})\to X$, and it
induces morphisms ${\rm Spec}(\R)\to X$ and ${\rm Spec}(R)\to X$.
These morphisms are called the {\em special fiber} and the {\em general fiber} of $\nu\in X(\mathcal{O})$,
respectively.

Consider two continuous semialgebraic maps $\alpha_1,\alpha_2\colon (0,1)\to X(\R)$. 
These are paths in the variety of real points in $X$.
We define an equivalence relation $\,\simeq\,$ on such paths as follows:
\begin{equation*}
 \alpha_1\simeq \alpha_2 \,\,:\,\,\Longleftrightarrow \,\,\exists \,c \in (0,1) \,\,\forall \,t \in (0,c) \,\,\,
 \alpha_1(t)=\alpha_2(t).
\end{equation*}
After covering the $R$-scheme $X$ with affine charts, Theorem \ref{thm:germs} gives a natural bijection
\begin{equation}\label{eq:bijgerms}
 \bigl\{\textrm{Continuous semialgebraic paths } \, [0,1)\to X(\R)\bigr\}/\!\simeq
 \quad \longleftrightarrow \quad X(\mathcal{O})
\end{equation}
such that $\alpha(0)\in X(\R)$ is the special fiber of the point in $X(\mathcal{O})$ associated to a path $\alpha$.

The affine space $R^n$ has two semialgebraic structures.
The first structure comes from  $R$  being an ordered field.
We say that $S \subset R^n$ is {\em order semialgebraic} if it is
defined by  a finite Boolean combination of inequalities $f(u) \geq 0$
where $f $ is a polynomial in $R[x] = R[x_1,\ldots,x_n]$.
The second structure  comes from $R$ being a valued field;
see \cite[Section 2.1]{Nic} for some basics.  We say that $S$ is {\em valuation semialgebraic}
if it is defined by a finite Boolean combination of inequalities
${\rm val}(g(u)) \geq {\rm val}(h(u))$ where $f,g \in R[x]$.
In what follows we are mixing the two notions of semialgebraicity.
We say that $S \subset R^n$ is {\em mixed semialgebraic}
if $S$ is defined by a Boolean combination of inequalities
$f(u) \geq 0$ and ${\rm val}(g(u)) \geq {\rm val}(h(u))$,
where $f,g,h \in R[x]$.

Our study on MM-curves combines the two settings of
semialgebraicity, and identifies curves
that are extreme in both of them. We reiterate the
definitions. A smooth projective curve $C$ 
of genus $g$ over $R$ is an {\em M-curve} if  it
has $g+1$ semialgebraically  connected components.
These components are called ovals and their number
$g+1$ is the maximal possible, thanks to Harnack's Theorem.
Recall that $C$ is a {\em Mumford curve} if there exists a semistable reduction such that the dual graph of its special fiber $C_\RR$ over the residue field $\RR$ has genus~$g$. Note that, for example by \cite[Proposition~3.47]{moduli}, one could equivalently require that, for \emph{every} semistable reduction, the dual graph of the special fiber has genus $g$.
The vertices of this dual graph are the irreducible components of
$C_\RR$, there is a proper edge for every intersection point
between two components,  and there is a loop for every 
node on a component. The genus is the number of
edges minus the number of vertices plus one.
Morrison and Ren \cite{MR} offered an algorithmic approach, and
Jell \cite{Jell} showed that every Mumford curve
admits a faithful tropicalization.
We say that an M-curve $C$ is an {\em MM-curve} if it is also
Mumford.

\begin{proposition}
The set of MM-curves  is mixed semialgebraic
 in the moduli space $\mathcal{M}_g(R)$.
\end{proposition}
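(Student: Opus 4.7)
The plan is to verify that each of the two defining conditions is semialgebraic of the appropriate type and then take their intersection, since the intersection of an order-semialgebraic set with a valuation-semialgebraic set is mixed semialgebraic by definition. I would work locally on an affine chart of $\mathcal{M}_g(R)$, viewing a moduli point as a tuple $a\in R^N$ parametrizing the coefficients of defining equations of a family of smooth curves $\{C_a\}$.

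For the M-curve condition, the statement ``the real locus $C_a(R)$ has exactly $g+1$ semialgebraically connected components'' is a first-order formula in the language of ordered fields with $a$ as parameter. Applying the Tarski--Seidenberg principle to the real closed field $R$, or equivalently Hardt's semialgebraic triviality theorem for the proper family $\mathcal{C}\to\mathcal{M}_g(R)$, shows that the M-locus is order-semialgebraic.

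For the Mumford condition, I would use the characterization that $C_a$ is Mumford if and only if its Berkovich skeleton has first Betti number $g$, equivalently if and only if a stable model has totally degenerate special fiber. The skeleton of an embedded curve over a valued field is combinatorially determined by valuations of polynomial invariants of $a$ (for instance, the minors and discriminants appearing in the tropicalization of a canonical embedding); equivalently, Raynaud's uniformization characterizes Mumfordness via the valuations of entries of a period matrix whose coefficients are polynomial in $a$. Either way the Mumford locus is cut out by Boolean combinations of inequalities ${\rm val}(g(a))\geq {\rm val}(h(a))$, hence is valuation-semialgebraic.

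The main obstacle is making the Mumford claim rigorous without invoking too much superstructure, since writing down an explicit defining set of valuation inequalities requires committing to a concrete model of the degeneration. The cleanest fallback is the Cherlin--Dickmann quantifier elimination theorem for real closed fields equipped with a convex valuation, which guarantees that any subset first-order definable using $+,-,\cdot,\leq$ and the valuation divisibility predicate is automatically mixed semialgebraic; both defining conditions of an MM-curve are first-order expressible in this combined language, which delivers the proposition uniformly.
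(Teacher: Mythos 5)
Your overall decomposition --- show the M-locus is order semialgebraic, show the Mumford locus is valuation semialgebraic, and intersect --- is exactly the structure of the paper's (sketched) proof, and your treatment of the M-condition is workable: uniform finiteness of topological types in a semialgebraic family (Hardt triviality) shows that the set of parameters where the fiber has exactly $g+1$ semialgebraically connected components is order semialgebraic, and this transfers to $R$ because $R$ is a real closed extension of $\RR$. (Be careful with the phrase ``first-order formula'': having a prescribed number of connected components is not naively first-order; it is the trivialization theorem that makes this locus definable. The paper instead cites Sepp\"al\"a--Silhol for semialgebraicity of the M-locus in $\mathcal{M}_g(\RR)$, which also handles the descent from a parameter space to the coarse moduli space, a point your chart-by-chart setup glosses over.)

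The genuine gap is in the Mumford half. You assert that the Berkovich skeleton is ``combinatorially determined by valuations of polynomial invariants of $a$'' and that Raynaud uniformization yields period data ``polynomial in $a$''; neither claim is justified, and you concede as much. The proposed rescue via Cherlin--Dickmann quantifier elimination does not close the gap: QE only converts a \emph{first-order} formula in the language of convexly valued ordered fields into a quantifier-free, hence mixed semialgebraic, one --- and the first-order expressibility of ``$C_a$ is a Mumford curve'' is precisely the hard point. The definition quantifies over semistable models (equivalently over formal or analytic data living over the valuation ring and its extensions), which is not visibly a first-order condition on the tuple $a$, so asserting that ``both defining conditions are first-order expressible in the combined language'' begs the question. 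The paper closes this step by invoking the rigid-analytic structure theory of Gerritzen, Herrlich, Bradley and Ichikawa: the Mumford locus is an open analytic subdomain of the moduli space that is cut out by valuation inequalities in moduli coordinates, which is what makes it valuation semialgebraic. Some input of this kind --- an actual description of the Mumford locus by valuation inequalities --- is required and cannot be replaced by quantifier elimination alone.
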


\begin{proof}[Sketch of Proof]
The first ingredient is that the locus of M-curves is a semialgebraic subset of
$\mathcal{M}_g(\RR)$, defined by polynomials over $\RR$.
This follows from the results of Sepp\"al\"a and Silhol in \cite{SS}.
Since $R$ is an ordered field extension of $\RR$,
we obtain same result also for $\mathcal{M}_g(R)$.

 The second ingredient is the structure of the moduli space of all Mumford curves,
 over any non-archimedean field.
This space is an open analytic subset of the moduli space.   For a nice
explanation see the recent article of Bradley \cite[Section 2.2]{Bra}.
This goes back to the 1980s, in the work of Gerritzen \cite{Ger} and Herrlich \cite{Her}.
The universal family over this subset was studied by Ichikawa in \cite{Ich}.
The analytic construction
 implies that Mumford curves are characterized by inequalities on the valuations of
moduli coordinates of the curve.
Hence the Mumford curves form a valuation semialgebraic
subset of $\mathcal{M}_g(R)$. 
It would be interesting to find a constructive proof via the
tropical techniques developed by
Gunn and Jell in \cite{GJ}.
\end{proof}

We illustrate this result for genus one curves,
realized as cubic curves in the plane $\PP^2(R)$:
$$ f(x,y,z) \,=\,c_1 x^3 + c_2 x^2 y + c_3 x^2 z + c_4 x y^2 + c_5 x yz + c_6 x z^2 + 
c_7 y^3 + c_8 y^2 z + c_9 y z^2 + c_{10}z^3. $$
The {\em discriminant} $\Delta(f)$ is a homogeneous polynomial
with $2040$ terms of degree $12$ in
 $c_1,\ldots,c_{10}$, including
$ -19683 c_1^4 c_7^4 c_{10}^4$.
  The {\em Aronhold invariant} $A(f)$ is a quartic with $25$ terms in $c_1,\ldots,c_{10}$,
  including $c_5^4$. The {\em j-invariant} of the cubic curve $V(f)$ is the rational function
  $$ j(f) \,\, = \,\,  \frac{A(f)^3}{\Delta(f)}. $$
  
  \begin{proposition} \label{prop:genusone}
  The plane cubic curve $V(f) \subset \PP^2 (R)$ is an MM-curve if and only if
  \begin{equation}
\label{eq:genusone}
  \Delta(f) > 0 \quad {\rm and} \quad {\rm val}\bigl( \,j(f)\, \bigr) < 0, 
  \end{equation}
  i.e.~the discriminant is positive in $R$ and the valuation of the j-invariant 
  is negative in $\QQ$.
  \end{proposition}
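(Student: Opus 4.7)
The plan is to decouple the two inequalities in \eqref{eq:genusone}: $\Delta(f) > 0$ will characterize exactly when $V(f)$ is a smooth M-curve, while $\mathrm{val}(j(f)) < 0$ will characterize exactly when $V(f)$ is a Mumford curve of genus one. Since $j(f) = A(f)^3/\Delta(f)$, the second condition already forces $\Delta(f) \neq 0$, so smoothness of $V(f)$ is implicit in either direction of the equivalence.

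For the M-curve direction I would appeal to the classical sign criterion for smooth real plane cubics. After reduction to Weierstrass form $y^2 = x^3 + ax + b$, the curve has two ovals --- the maximum permitted by Harnack for $g=1$ --- precisely when the cubic in $x$ has three distinct real roots, i.e.\ when the Weierstrass discriminant $-16(4a^3+27b^2)$ is positive, and otherwise has a single oval. Because the number of semialgebraic connected components of $V(f)$ is a first-order property in the language of ordered fields, the same equivalence transfers from $\RR$ to the real closed extension $R$ by Tarski--Seidenberg. The sign of the full discriminant $\Delta(f)$ of $f \in R[x,y,z]$ can be calibrated against a single reference, for instance the Fermat cubic $x^3+y^3+z^3$: here $\Delta = -19683 < 0$ and the curve has a single oval, confirming that $\Delta(f)>0 \Leftrightarrow$ M-curve.

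For the Mumford direction I would invoke the Kodaira--N\'eron classification of reduction types. A smooth elliptic curve over a discretely valued field has multiplicative reduction --- the minimal regular model being already semistable with nodal rational special fiber --- if and only if $\mathrm{val}(j) < 0$, while $\mathrm{val}(j) \geq 0$ corresponds to potentially good reduction. In the first case the dual graph of the special fiber consists of one vertex with one loop, hence has genus $1 = g$, so $V(f)$ is Mumford. In the second case every semistable model has a smooth elliptic special fiber, with dual graph of genus $0$, so $V(f)$ is not Mumford. Combining the two characterizations yields the proposition.

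The delicate point will be to confirm that this valuative characterization is not disrupted by $R$ being non-algebraically-closed: even when $\mathrm{val}(j)<0$ produces non-split multiplicative reduction, with the two branches of the node forming a complex conjugate pair over $\RR$, the special fiber still has one irreducible component and one node, so the combinatorial dual graph is still a single vertex with a loop. Hence the Mumford property depends only on $\mathrm{val}(j)$ and is compatible with the M-curve property governed by the sign of $\Delta(f)$.
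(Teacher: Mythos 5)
Your proposal is correct and follows essentially the same route as the paper: both decouple the two inequalities, use reduction to Weierstrass normal form to show that $\Delta(f)>0$ is equivalent to having two ovals, and invoke the standard fact that ${\rm val}(j)<0$ characterizes multiplicative reduction, hence the Mumford property, in genus one (the paper simply cites this as well known in tropical geometry, with \cite{CS} as a reference). Your additional care about sign calibration, Tarski--Seidenberg transfer to $R$, and non-split multiplicative reduction fills in details the paper leaves implicit, but does not change the argument.
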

  
  \begin{proof}
  We can  transform $f$ into Weierstrass normal form over $R$.
  This scales $\Delta(f)$ by a positive scalar, so it fixes
  the sign of $\Delta(f)$.    In Weierstrass normal form, it is easy to check that
  $V(f)$ has two ovals if and only if $\Delta(f)  > 0$.
  The fact that ${\rm val}(j(f)) $ is negative if and only if 
  $V(f)$ is a Mumford curve is well known in tropical geometry.
  See \cite{CS} for a refinement.
    \end{proof}

Finding a
characterization like (\ref{eq:genusone}) is considerably more
challenging in higher genus, even for hyperelliptic curves.
For instance, consider the genus two curve defined by $y^2 = p(x)$, where $p(x)$ has degree six.
This  is an M-curve if and only if all six roots of $p(x)$ are real.
It is a Mumford curve if and only if condition
III or IV or VII holds in \cite[Theorem 2.11]{Hel}.
We are hopeful that the faithful embedding results in \cite{GJ, Jell} will
lead to progress when $g \geq 3$.

In this article, we assume $g \geq 3 $, and we focus on
curves that are canonically embedded into $\PP^{g-1}$.
To set the stage, we first consider curves of genus $g$ and any degree $d$ in
any $\PP^n$.
Let $H$ be the Hilbert scheme of these curves.
We  shall examine the bijection from (\ref{eq:bijgerms}) for $H$.

 Let $x\in H(\mathcal{O})$ be an $\mathcal{O}$-valued point with general fiber $y\in H(R)$ and $\alpha\colon [0,1)\to H(\R)$ a corresponding continuous semialgebraic path in the Hilbert scheme. Let $Y$ be the curve over $R$ corresponding to $y$. For every $t\in[0,1)$, we write $X_t$ for the curve over $\R$ 
 that corresponds to $\alpha(t)$. We want to relate the topology of
 the non-archimedean curve $Y(R)$ to that of the real curve $X_t(\R)$ for small $t>0$. For this purpose we will employ semialgebraic triviality.

\begin{theorem}\label{thm:trivial}
In the situation above,
    there exists a real constant $c \in (0,1)$,  an inclusion  $F'\subset F$ 
    of semialgebraic sets, and a continuous semialgebraic map
    \begin{equation*}
        \psi \,\colon \, F\times(0,c)\,\,\to \,\,\PP^n(\CC)
    \end{equation*}
    with the following property:
    for each real constant $t \in (0,c)$, the map $\psi_t\colon F\to\PP^n(\CC)$ is a semialgebraic homeomorphism onto the complex curve $X_t(\CC)$ which maps $F'$ onto $X_t(\R)$.
\end{theorem}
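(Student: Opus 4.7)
The plan is to realize the full family $\{X_t\}$ as a single semialgebraic set fibered over an interval, and then to invoke Hardt's semialgebraic triviality theorem.

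I would first set up the total space. The Hilbert scheme $H$ is a projective $\R$-scheme carrying a universal family $\mathcal{U}\subset \PP^n\times H$ which, over each affine chart of $H$, is cut out by polynomials in $\PP^n$ whose coefficients are regular functions on the chart. Since $\alpha\colon [0,1)\to H(\R)$ is continuous and semialgebraic and $[0,1)$ is connected, after shrinking to some $c_0\in (0,1)$ the image $\alpha([0,c_0))$ lies in a single such chart. Pulling back via $\alpha$ produces a continuous semialgebraic family of defining polynomials for $X_t$, so that the total space
\[
 \mathcal{X}\,:=\,\bigl\{(p,t)\in \PP^n(\CC)\times (0,c_0)\,:\, p\in X_t(\CC)\bigr\}
\]
is a semialgebraic subset of $\PP^n(\CC)\times (0,c_0)$, viewed through the standard real embedding of $\PP^n(\CC)$. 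Its real sub-family $\mathcal{X}_{\R}:=\mathcal{X}\cap (\PP^n(\R)\times (0,c_0))$ is then semialgebraic as well.

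Next I would apply Hardt's semialgebraic triviality theorem to the projection $\pi\colon \mathcal{X}\to (0,c_0)$, using $\mathcal{X}_\R$ as a distinguished subset to be trivialized simultaneously. The resulting finite semialgebraic partition of $(0,c_0)$ must have a piece that contains an interval $(0,c)$ for some $c\in(0,c_0]$. Fixing some $t_0\in(0,c)$, set $F:=\pi^{-1}(t_0)$, which is semialgebraically homeomorphic to $X_{t_0}(\CC)$, and $F':=F\cap \mathcal{X}_\R$, homeomorphic to $X_{t_0}(\R)$. The trivialization is a continuous semialgebraic homeomorphism
\[
 \Phi\colon (0,c)\times F \,\longrightarrow\, \pi^{-1}\bigl((0,c)\bigr)
\]
over $(0,c)$ which restricts to a homeomorphism of $(0,c)\times F'$ onto $\pi^{-1}((0,c))\cap \mathcal{X}_\R$. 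Composing with the projection to the first factor yields the desired map $\psi(x,t):=\mathrm{pr}_1(\Phi(t,x))$, and by construction each $\psi_t$ is a semialgebraic homeomorphism $F\to X_t(\CC)$ sending $F'$ onto $X_t(\R)$.

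The only real difficulty lies in the first step: one must verify that the local universal family over $H$, after being pulled back along the semialgebraic path $\alpha$, genuinely assembles into a semialgebraic subset of a fixed real affine space. This is routine once one covers $\PP^n(\CC)$ by affine charts and tracks defining polynomials whose coefficients are semialgebraic functions of $t$, but it requires some care because the Hilbert scheme is only locally of finite presentation and no global system of equations is available. With this semialgebraic setup in place, Hardt's theorem delivers the conclusion as a black box.
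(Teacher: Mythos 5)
Your proposal is correct and follows essentially the same route as the paper: build the total space of the family as a semialgebraic subset of $\PP^n(\CC)\times(0,c')$ and apply Hardt's semialgebraic triviality theorem (the paper cites \cite[Theorem~9.3.2]{rag}) with the real locus as a compatible distinguished subset, choosing the piece of the resulting partition that contains an interval $(0,c)$. Your write-up merely makes explicit the chart-by-chart construction of the total space and the role of the distinguished subset, which the paper leaves implicit.
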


\begin{proof}
    There exists $c' \in (0,1)$ and a semialgebraic subset $S\subset\PP^n(\CC)\times(0,c')$ such that the fiber over every $t\in(0,c')$ under the projection onto the second coordinate is $X_t(\CC)$. We
    now apply \cite[Theorem~9.3.2]{rag} to obtain 
    $c\in(0,c')$, $F'\subset F$ and $\psi$ having the stated properties.
        \end{proof}

The next corollary says that relevant topological features over $\RR$ agree
with those over~$R$. This uses the notion of connectivity in
\cite[Section 5.2]{basuetal} and not the topologies in
\cite[Section~2]{JSY}.

\begin{corollary}
    The number of semialgebraically connected components of $Y(R)$ agrees with the number of connected components of  the real curve $X_t(\R)$ for any value $t$ in $ (0,c)$.
\end{corollary}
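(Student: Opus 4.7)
First I would note that Theorem~\ref{thm:trivial} already provides, for each $t \in (0,c)$, a semialgebraic homeomorphism $\psi_t|_{F'}\colon F' \to X_t(\R)$, so the number of connected components of $X_t(\R)$ equals $k := \#\pi_0(F')$ and is in particular independent of $t$. The task thus reduces to proving that the number of semialgebraically connected components of $Y(R)$ also equals $k$.

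For this I would transfer the trivialization from $\R$ to $R$ via Tarski--Seidenberg. The map $\psi$ and the set $F'\subset F$ are described by Boolean combinations of polynomial inequalities with real coefficients in the coordinates on $F$ and the parameter $t$. Since $c>0$ lies in $\R$ and $\epsilon$ has positive valuation in $R$, we have $0<\epsilon<c$ in the ordering of $R$. Specializing $t=\epsilon$ in the defining formulas yields a semialgebraic set $F'(R)\subset R^m$ together with a semialgebraic homeomorphism $\psi_\epsilon\colon F'(R)\to Y(R)$. Here I am using that, under the bijection (\ref{eq:bijgerms}), the $R$-point of $H$ attached to $\alpha$ is obtained by substituting $\epsilon$ for $t$ in the polynomial formulas describing the family $\{X_t\}$, so the curve $Y$ is precisely $X_\epsilon$. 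Combining this with the standard transfer of semialgebraic connectivity under real closed extensions (see, e.g.~\cite[Section~5.2]{basuetal}), which gives $\#\pi_0^{\rm sa}(F'(R))=\#\pi_0(F')=k$, completes the argument.

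The main obstacle I anticipate is justifying that $\psi_\epsilon$ really surjects onto all of $Y(R)$, rather than merely onto a subset. Injectivity and continuity transfer formally, because the analogous statements hold over $\R$ for every $t\in(0,c)$. Surjectivity demands identifying $Y$ with the specialization of the family at $t=\epsilon$; this amounts to applying Theorem~\ref{thm:germs} to each defining equation of the family, not only to individual points on the curve, and then invoking the uniqueness in the bijection (\ref{eq:bijgerms}) to conclude that the two $R$-schemes agree. With this identification secured, the three homeomorphisms
$F'\cong X_t(\R)$, $F'(R)\cong Y(R)$, and the transfer $\#\pi_0^{\rm sa}(F'(R))=\#\pi_0(F')$ chain together to yield the claimed equality of component counts.
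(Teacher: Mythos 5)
Your proposal is correct and follows essentially the same route as the paper: both lift the trivialization $F'$ and the map $\psi$ from $\R$ to $R$ by reinterpreting the defining semialgebraic formulas over the extension, specialize the parameter to $t=\epsilon$ to obtain a semialgebraic homeomorphism onto $Y(R)$, and then invoke the invariance of the number of semialgebraically connected components under real closed field extensions from \cite[Section~5.2]{basuetal}. Your extra care about surjectivity of $\psi_\epsilon$ (identifying $Y$ with the specialization of the family at $t=\epsilon$ via the bijection (\ref{eq:bijgerms})) is a point the paper passes over silently, but it is the same argument.
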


\begin{proof}
    The continuous semialgebraic map $F'\times(0,c)\to\PP^n(\R)$ from Theorem \ref{thm:trivial}
     lifts to a map $\varphi\colon F'_R\times(0,c)\to\PP^n(R)$ over $R$. Here $F'_R$ is a lift of $F'$ in the sense that it is described over $R$ by the same inequalities as $F'$ over $\R$. Then $\varphi_\epsilon\colon F'_R\to Y(R)$ is a semialgebraic homeomorphism. The number of semialgebraically connected components of $F'_R$ and $F'$ 
     coincide. The latter equals the number of connected components of $X_t(\R)$ for $t \in (0,c)$.
\end{proof}

We now fix a map $\psi$ as in Theorem \ref{thm:trivial}, and we study what happens when 
$t$ tends to~$0$. We know from \cite[Proposition~II-29]{geometryofschemes}  that $X_0(\CC)$ is the Hausdorff limit of $X_t(\CC)$. This shows that $\lim_{t\to 0}\psi(a,t)$ exists in $X_0(\CC)$ 
for every $a\in F$.
Conversely, every point in $X_0(\CC)$ can be written as such a limit. Thus, we have a surjective continuous semialgebraic map
\begin{equation*}
    \widetilde{\psi}\colon F\to X_0(\CC)\,,\,\, a\mapsto \lim_{t\to 0}\psi(a,t).
\end{equation*}
It satisfies $\widetilde{\psi}(F')\subset X_0(\R)$, but equality can fail in general.
However, 
 $\widetilde{\psi}(F')= X_0(\R)$  does hold when the smooth points of $X_0(\R)$ are dense (in the Euclidean topology) in $X_0(\R)$.
To see this, we note that
  the fiber of $\widetilde{\psi}$ over any smooth real point has only one element.
  That point must be real because
    nonreal preimages of real points come in complex conjugate pairs.
  
We close this section by reviewing how tangent spaces can be described using dual numbers.
This will later be applied to Hilbert schemes. In what follows, $H$ can be any quasi-projective scheme over $\R$.
We write $D=\RR[\epsilon]/(\epsilon^2)$ for the {\em ring of dual numbers} over $\RR$.
 There is a natural bijection between $H(D)$ and pairs $(x,\eta)$ where $x\in H(\RR)$ is a real point and $\eta\in T_x(H)$ is a tangent vector 
 to $H$ at $x$.
 This is the content of \cite[Exercise~II.2.8]{hartshorne}.
 
 Consider the  ring of algebraic power series over the real numbers $\R$. This is the subring
 \begin{equation}
 \label{eq:algseries}
 A \,\,:=\,\, \{f\in\R[[\epsilon]]\,:\, f\textrm{ is algebraic over }\R(\epsilon)\} \quad \subset \quad \mathcal{O}\,\,
 \subset \,\, R.
\end{equation}
There is a natural ring isomorphism $A/(\epsilon^2)\to D$. This induces a map $H(A)\to H(D)$. Therefore, every point in $H(A)$ gives rise to a pair $(x,\eta)$ where $x\in H(\R)$ and $\eta\in T_x(H)$. 
If $x$ is a smooth point of the scheme $H$ then every tangent vector arises in this way:

\begin{lemma}\label{lem:smoothtangent}
 Let $x\in H(\R)$ be a smooth point on $H$ and $\eta\in T_x(H)$ any tangent vector. There exists a point in 
 $H(A)$ whose image under the natural map $H(A)\to H(D)$ is $(x,\eta)$.
\end{lemma}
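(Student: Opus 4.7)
The claim is local on $H$ at $x$, so I would reduce to the case where $H$ is an affine closed subscheme of $\R^n$ cut out by polynomials $f_1,\ldots,f_m \in \R[y_1,\ldots,y_n]$, with $x = (x_1,\ldots,x_n) \in \R^n$. Smoothness of $H$ at $x$ means that the Jacobian $J := (\partial f_i/\partial y_j)(x)$ has rank $n-d$, where $d = \dim_x H$; after discarding redundant generators of the ideal I may assume $m = n-d$. Writing $v \in \R^n$ for the coordinate representation of $\eta$, we have $Jv = 0$. The goal becomes: construct $\phi = (\phi_1,\ldots,\phi_n) \in A^n$ satisfying $f_i(\phi) = 0$ for all $i$, $\phi(0) = x$, and $\phi'(0) = v$.

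First I would produce a formal solution $\phi \in \R[[\epsilon]]^n$. After relabelling coordinates, I may assume the first $n-d$ columns of $J$ form an invertible submatrix. Setting $\phi_{n-d+j}(\epsilon) := x_{n-d+j} + v_{n-d+j}\epsilon$ for $j = 1,\ldots,d$, the formal implicit function theorem solves the system $f_1 = \cdots = f_{n-d} = 0$ for $\phi_1,\ldots,\phi_{n-d} \in \R[[\epsilon]]$ with prescribed constant terms $x_1,\ldots,x_{n-d}$. Differentiating $f_i(\phi(\epsilon)) = 0$ at $\epsilon = 0$ yields $J\phi'(0) = 0$, and combined with the fixed last $d$ entries and the invertibility of the first $n-d$ columns of $J$, this forces $\phi'(0) = v$.

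The decisive step is upgrading this formal solution to an algebraic one in $A^n$. I would invoke Artin's approximation theorem: for any integer $N$, the formal solution $\phi$ of the polynomial system $f_1 = \cdots = f_{n-d} = 0$ admits an algebraic approximation $\tilde\phi \in A^n$ with $\tilde\phi \equiv \phi \pmod{\epsilon^N}$. Taking $N = 2$ preserves both $\tilde\phi(0) = x$ and $\tilde\phi'(0) = v$, yielding the required point of $H(A)$ whose image in $H(D)$ is $(x,\eta)$. Equivalently, since $A$ (as defined in \eqref{eq:algseries}) is the Henselization of $\R[\epsilon]_{(\epsilon)}$, Hensel's lemma allows the implicit function theorem to be carried out directly inside $A$, bypassing $\R[[\epsilon]]$ altogether. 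The only non-elementary ingredient is Artin approximation, or equivalently the Henselian property of $A$; matching modulo $\epsilon^2$ is precisely what captures both the base point and the tangent direction, and this is the one place where care is required. I do not anticipate any deeper geometric obstacle.
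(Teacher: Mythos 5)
Your argument is correct, but it takes a genuinely different route from the paper. The paper first makes a geometric reduction: since $x$ is a smooth point, it chooses a smooth curve $C\subset H$ through $x$ whose tangent line at $x$ maps to the span of $\eta$, thereby reducing to the case $\dim H=1$; it then concludes from the fact that the completed local ring of a curve at a smooth $\RR$-point is $\R[[\epsilon]]$, the algebraicity of the resulting power series parametrization being implicit in the curve being algebraic. You instead work directly with defining equations in an affine chart, build a formal arc with the prescribed $1$-jet via the formal implicit function theorem, and then invoke Artin approximation (equivalently, the Henselian property of $A$ as the Henselization of $\R[\epsilon]_{(\epsilon)}$) to replace it by an algebraic arc agreeing modulo $\epsilon^2$ --- which is exactly what is needed to preserve $(x,\eta)$. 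Your version is more explicit about how the tangent direction is matched and avoids the curve-slicing step, at the cost of citing Artin approximation; note also that your remark about Hensel's lemma is the cleaner of your two options, since the invertibility of the chosen $(n-d)\times(n-d)$ Jacobian block lets you solve directly in $A$ with no approximation needed. One small point to tidy: ``discarding redundant generators'' should be justified by observing that $V(f_1,\dots,f_{n-d})$ is smooth of dimension $d$ at $x$ and hence coincides with $H$ in a Zariski neighborhood of $x$, so that an $A$-point of it with special fiber $x$ automatically factors through $H$.
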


\begin{proof}
 We may assume that $\eta \neq 0$.
 Since $x$ is a smooth point, there is a smooth curve $C\subset H$ through $x$ and a tangent vector $0\neq\eta'\in T_xC$ which maps to $\eta$ under the map $T_xC\to T_xH$ induced by inclusion. Hence it suffices to prove the claim for the case when $H$ is a curve.
 In that case,
   Lemma \ref{lem:smoothtangent}
  follows from the fact that the completion of the local ring of a curve at a smooth 
  $\RR$-valued point is isomorphic to the power series ring $\R[[\epsilon]]$.
\end{proof}

\section{Graph Curves and Planar Graphs}
\label{sec3}

Let $G$ be a $3$-connected trivalent simple graph of genus $g$.
Such a graph has $2g-2$ vertices and $3g-3$ edges.
The associated graph curve $C_G$ consists of $2g-2$ lines
in $\PP^{g-1}$. Two lines intersect precisely when the corresponding
vertices in $G$ are connected by an edge.
Graph curves were introduced by Bayer and Eisenbud \cite{BE}, and they
 are unique up to projective transformations of $\PP^{g-1}$.
Their coordinates are given in \cite[Proposition 5.2]{GHSV}.
The MathRepo page \url{https://mathrepo.mis.mpg.de/selfdual/}
offers {\tt Macaulay2} code for creating the ideals of $C_G$ for graphs $G$ of genus $g \leq 7$.
For instance, when $G$ is the edge graph of the $3$-cube
then the ideal is
$\langle xy, zv, w(x{+}y{+}z{+}v{+}w) \rangle$;
see Example \ref{ex:g5} and \cite[Example 5.1]{GHSV}.
Further examples of graphs $G$ are shown in Figure  \ref{fig:G81b}
for $g=5$ and in Figure \ref{fig:assoc} for $g=8$.

Let $H_g$ denote the Hilbert scheme of curves
of genus $g$ and degree $2g-2$ in $\PP^{g-1}$.
Note that the curve $C_G$ is a $\QQ$-valued point in $H_g$. We are interested in the following question.

\begin{question}\label{qu:graphqu}
 For which graphs $G$, does there exist a point $x $ in the Hilbert scheme $H_g(\mathcal{O})$ 
whose curve $Y$  over  $R$ is an MM-curve 
 and  whose special fiber $X_0$ is the graph curve $ C_G\,$?
 \end{question}

We saw three such graphs $G$ in the Introduction.
The desired point  $x$ in $H_g(\mathcal{O})$ corresponds to
a continuous semialgebraic path $\alpha\colon[0,1)\to H_g(\R)$ such that $\alpha(0) = X_0$ equals
$C_G$ and $\alpha(t)$ is a
smooth maximal curve $X_t$ over $\RR$ for sufficiently small $t>0$. 
Given such a path $\alpha$, we now explain how to
encode the topology of $X_t$ in terms of combinatorial data. By Theorem \ref{thm:trivial}, there is a semialgebraic set $S$ and a continuous semialgebraic surjection
\begin{equation}\label{eq:varphi}
    \varphi\,\colon \,S\,\to\, X_0(\R)
\end{equation}
such that $S$ is homeomorphic to $X_t(\R)$ for  small $t>0$ and whose fiber over a smooth point has cardinality one. Since $X_t$ is smooth, the set $S$ is a disjoint union of $r$ circles $S_1,\ldots,S_r$.

We write $e_1,\ldots,e_{3g-3}$ for the edges of $G$. The real part of the graph curve $X_0 = C_G$ is
an arrangement of $2g-2$ circles in $\PP^{g-1}(\RR)$. Each circle intersects three other circles.
Each circle is therefore the union of three line segments, each bounded by two intersection points.
Let $L_{ij}$ denote the open line segment bounded by the nodes corresponding to the edges $e_i$ and $e_j$
of the graph $G$. Hence the circle corresponding to vertex $v$ of the graph $G$ is the union of the open line segments $L_{ij}$, $L_{ik}$ and $L_{jk}$, where $e_i$, $e_j$ and $e_{k}$ are the edges adjacent to $v$, and their corresponding nodes.
The closure $L_{ij}'$ of  each inverse image $\varphi^{-1}(L_{ij})$ is a curvy closed line segment.
 The real algebraic curve $S$ is the union of these curve segments $L_{ij}'$.
 
 Fix an edge $e_i$ of $G$ with vertices $v$ and $v'$.
 Let $e_{j_1}$ and $e_{j_2}$ resp. $e_{k_1}$ and $e_{k_2}$ be the other edges adjacent to $v$ and $v'$.
The node of $X_0$ corresponding to $e_i$ is contained in the closures~of
\begin{equation*}
    L_{ij_1}, \,L_{ij_2}, \,L_{ik_1} \,\, {\rm and} \,\, L_{ik_2}.
\end{equation*}
The closure of the union of the preimages of these line segments under  $\varphi$  equals
\begin{equation}
\label{eq:preimages}
    L_{ij_1}'\cup L_{ij_2}'\cup L_{ik_1}'\cup L_{ik_2}'.
\end{equation}
This semialgebraic curve has precisely two connected components. These are either $L_{ij_1}'\cup L_{ik_1}'$ and $L_{ij_2}'\cup L_{ik_2}'$ or $L_{ij_1}'\cup L_{ik_2}'$ and $L_{ij_2}'\cup L_{ik_1}'$.
Hence the map $\varphi$ induces an edge pairing of $G$.
We define an {\em  edge pairing} to be a map $\rho$ that assigns to each edge $e=(v,v')$ of $G$ a partition
    \begin{equation}
    \label{eq:edgepairing}
        \rho(e)\,\,=\,\,\bigl\{\{e_i,e_j\},\{e_k,e_l\}\bigr\}. 
           \end{equation}
    Here $e,e_i,e_k$ are the three edges adjacent to $v$ and $e,e_j,e_l$ are the three edges adjacent to $v'$.

\begin{example}[$g=5$] \label{ex:twelve}
Fix the graph $G$ with $12$ edges shown in Figure~\ref{fig:G81b}.
It has $2^{12} = 4096$ edge pairings $\rho$. To specify $\rho$, one
makes a choice for each edge $e$. For instance, for 
$e = 47$,
the two choices are $\rho(e) = \{\{34,67\},\{45,78\}\}$ or
$\rho(e) = \{\{34,78\},\{45,67\}\}$. 
One distinguished edge pairing $\rho$ arises from the planar embedding of this graph.
This is shown on the right in Figure~\ref{fig:G81b}.
Here we have $\rho(e) = \{\{34,78\},\{45,67\}\}$, as shown in
red and green.
\end{example}

\begin{figure}[h]
\vspace{-0.15in}
$$ \!\!\!\!\! \!\!
\includegraphics[width = 8.6cm]{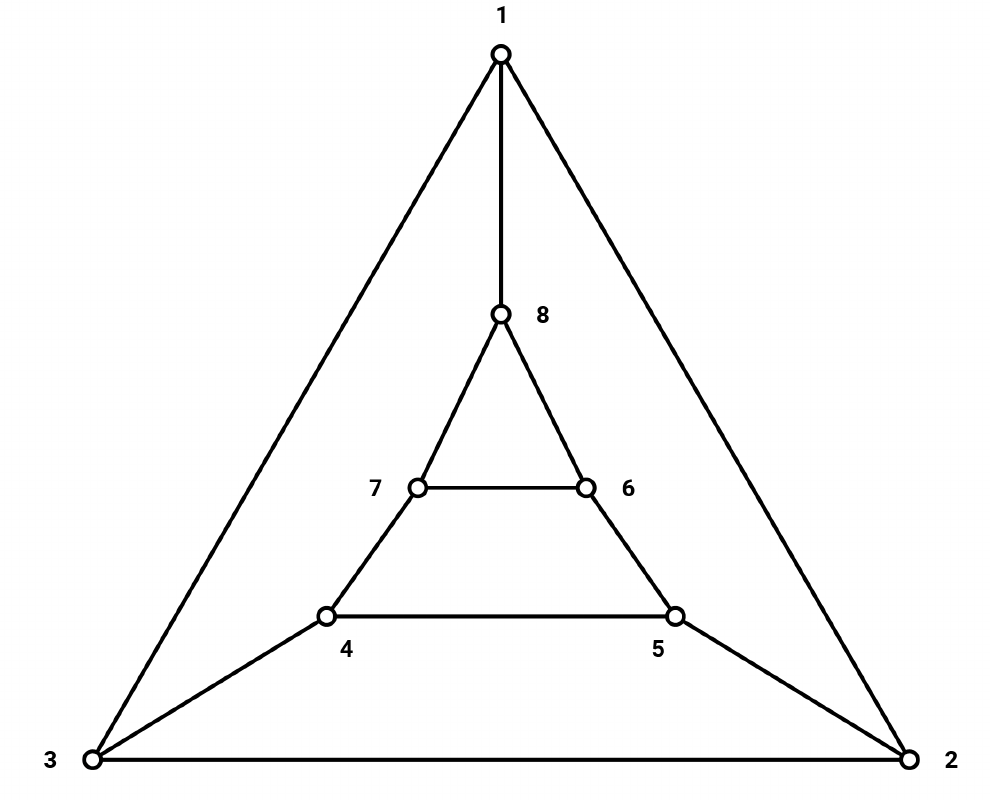} \,\,
\includegraphics[width = 8.9cm]{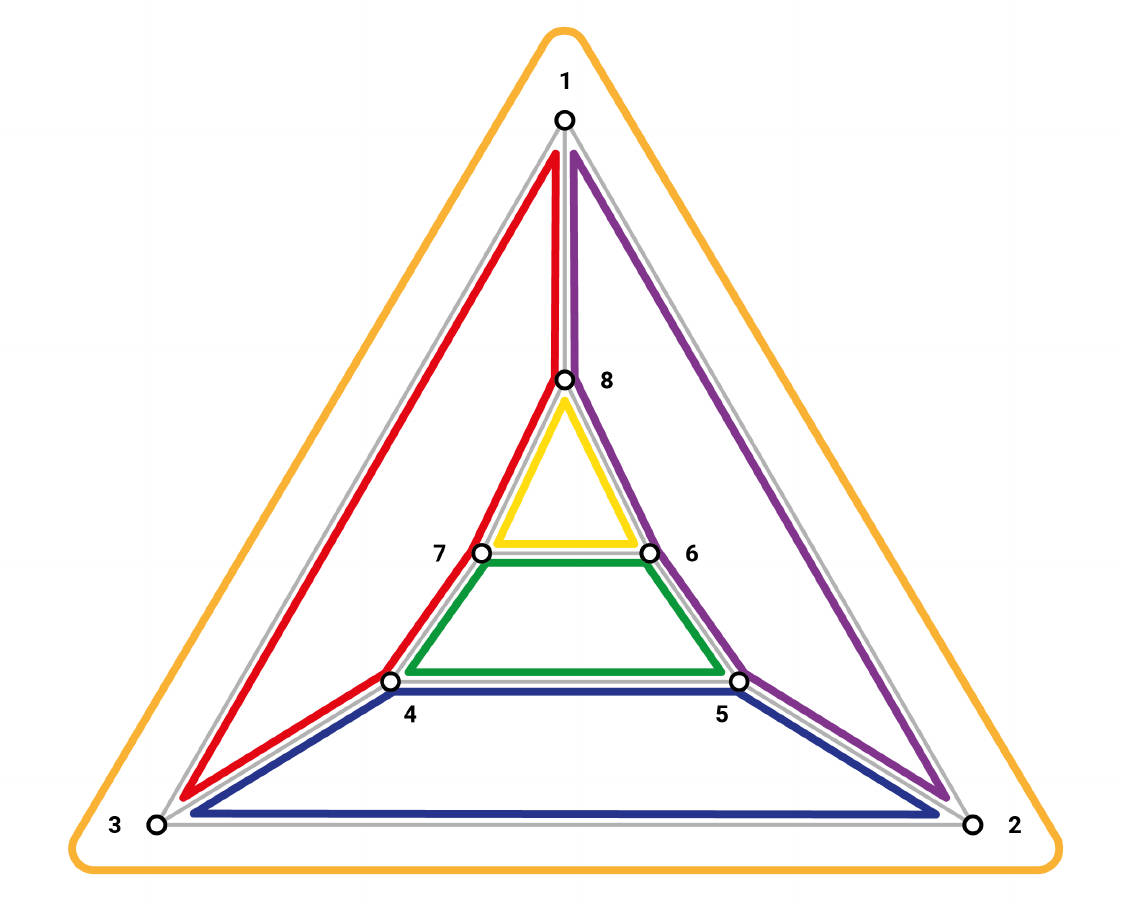} \vspace{-0.15in}
$$
\caption{A genus five graph $G$ and its double-cover $G_\rho \rightarrow G$ with six cycles.}
\label{fig:G81b}
\end{figure}

Given any edge pairing $\rho$ of $G$, we now define a new graph $G_\rho$ as follows.
The vertices of $G_\rho$ are the pairs 
 $\{e_i,e_j\}$ of edges of 
 the given graph $G$ that share a common vertex. Two pairs $\{e_i,e_j\}$ and $\{e_k,e_l\}$ 
 are connected by an edge in the new graph $G_\rho$ if and only if $\{e_i,e_j\}\cap \{e_k,e_l\}=\{e\}$ for some edge $e$ of $G$ where $\,(\{e_i,e_j\}\cup \{e_k,e_l\})\backslash \{e\}\in\rho(e)$. 
An example is shown on the right in Figure \ref{fig:G81b}:
the graph $G_\rho$ consists of the $24$ colorful edges.
The red edge covering $e = 47$ connects the vertices $\{34, e\}$ and $\{78, e\}$ in $G_{\rho}$.
  
  The graph $G_\rho$ is  always a disjoint union of cycles. There exists a natural surjective map
\begin{equation}
\label{eq:2to1map}
 G_\rho \,\to \,G
\end{equation}
that sends the edge connecting $\{e_i,e_j\}$ and $\{e_k,e_l\}$ in $G_\rho$ to
the edge $e$ of $G$ that satisfies
  $\{e\} = \{e_i,e_j\}\cap\{e_k,e_l\}$. Every edge of $G$ is covered by exactly two edges of $G_\rho$.
The vertices of $G_\rho$ correspond to the curvy line segments $L_{ij}'$.
This correspondence respects the topology:

\begin{proposition} \label{prop:determined}
The number of ovals of the smooth curve $S$ in $\PP^{g-1}(\RR)$ 
 is determined by the edge pairing $\rho$ arising from $\varphi$ in 
(\ref{eq:varphi}). To be precise, 
 $S$ is homeomorphic to the graph~$G_\rho$.
\end{proposition}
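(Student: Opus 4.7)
The plan is to realise both $S$ and $G_\rho$ as $1$-dimensional CW-complexes and to exhibit an incidence-preserving correspondence that exchanges their vertex and edge sets. On $S$, I take as closed $1$-cells the $6g-6$ curvy arcs $L'_{ij}$; this is legitimate because the restriction of $\varphi$ to the preimage of the interior of $L_{ij}$ is a bijection onto a $1$-manifold of smooth points of $X_0(\R)$, hence a homeomorphism onto an open arc. The $0$-cells are the preimages under $\varphi$ of the $3g-3$ nodes of the real graph curve $X_0(\R)$.

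The technical heart of the proof is to pin down, for each edge $e_i$ of $G$, the fibre of $\varphi$ over the corresponding node. I claim this fibre consists of exactly two points, and that the four arcs $L'_{ij_1},L'_{ij_2},L'_{ik_1},L'_{ik_2}$ appearing in (\ref{eq:preimages}) attach to them precisely according to the blocks of the partition $\rho(e_i)$. The bound ``at most two'' is the defining property of $\rho(e_i)$: the semialgebraic set in (\ref{eq:preimages}) has exactly two connected components, and their traces on the fibre are distinct. The bound ``at least two'' and the fidelity of the pairing rely on smoothness of the real algebraic curve $S$: at any $0$-cell the local degree must equal $2$, so a single $4$-valent point above the node is forbidden. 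Together these force $S$ to be the union of the closed arcs $L'_{ij}$, glued pairwise at two points over each node in the pattern prescribed by~$\rho$.

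Now compare with $G_\rho$: it has $3(2g-2)=6g-6$ vertices (one pair $\{e_i,e_j\}$ per arc $L'_{ij}$) and $2(3g-3)=6g-6$ edges (one for each choice of an edge $e$ of $G$ and a block $P\in\rho(e)$). Sending the vertex $\{e_i,e_j\}$ of $G_\rho$ to the arc $L'_{ij}$, and sending the edge of $G_\rho$ over $e$ labelled by the block $P$ to the corresponding preimage point in the fibre of $\varphi$ above the node of $e$, produces a bijection that swaps vertices and edges while preserving incidence, by the previous paragraph. Since both $G_\rho$ and $S$ are $2$-regular, each connected component is a topological cycle, and an incidence-preserving swap of this form induces the desired homeomorphism $S\cong G_\rho$. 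In particular the number of ovals of $S$ equals the number of cycles of $G_\rho$.

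The principal obstacle is the local count of $\varphi^{-1}$ above each node. It amounts to turning the defining property of the edge pairing $\rho$ into a genuine statement about preimage points in $S$, and then ruling out further identifications using the smoothness of $S$ together with the semialgebraic trivialisation of Theorem~\ref{thm:trivial}. Once this local picture is secured, the rest of the argument is routine combinatorics.
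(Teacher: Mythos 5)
Your proposal is correct and follows essentially the same route as the paper: the paper's one-line proof simply invokes the decomposition of $S$ into the arcs $L'_{ij}$ and the two-component structure of the preimage (\ref{eq:preimages}) over each node, which is exactly the local picture you make explicit before matching it combinatorially with $G_\rho$. Your added justification that each node has exactly two preimage points (ruling out a $4$-valent point because $S$ is a $1$-manifold) is a worthwhile detail the paper leaves implicit.
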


\begin{proof}
This is a consequence of the decomposition of $S$ near $C_G$ that was described above.
\end{proof}

\begin{corollary}
The curve $S$ is maximal if and only if the graph $G_\rho$ is a union of $g+1$~cycles.
This combinatorial condition is necessary and sufficient for $Y$ to be an MM-curve over $R$.
\end{corollary}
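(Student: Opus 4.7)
The plan is a short assembly of the preceding results, in three steps.

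First, I would use Proposition \ref{prop:determined} to identify $S$ with the graph $G_\rho$ as topological spaces. Since $\alpha(t)$ is the class of the smooth curve $X_t$ for $t\in(0,c)$ and $S$ is homeomorphic to $X_t(\R)$, the space $S$ is a disjoint union of circles, and its connected components are in bijection with the cycles of $G_\rho$. Harnack's bound then says this number is at most $g+1$, with equality exactly when $X_t$ is maximal. This proves the first sentence of the corollary.

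Second, I would transfer the count from $X_t(\R)$ to $Y(R)$ by invoking the Corollary to Theorem \ref{thm:trivial}: the number of semialgebraically connected components of $Y(R)$ equals the number of connected components of $X_t(\R)$ for small $t>0$. Hence $Y$ is maximal over $R$ if and only if $G_\rho$ has $g+1$ cycles.

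Third, I would observe that $Y$ is automatically a Mumford curve in this situation. The given point $x\in H_g(\mathcal{O})$ is a semistable integral model of $Y$ because its special fiber $C_G$ is nodal, and by construction the dual graph of $C_G$ is the graph $G$ itself, which has genus $g$. By the definition of Mumford curve recalled in Section \ref{sec2}---where it was noted that existence of a \emph{single} semistable reduction with dual graph of genus $g$ suffices---$Y$ is Mumford. Combining this with the second step, $Y$ is an MM-curve if and only if $G_\rho$ is a disjoint union of $g+1$ cycles.

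I do not anticipate a serious obstacle: essentially all of the work has been done in Proposition \ref{prop:determined} and Theorem \ref{thm:trivial}, together with the tautology that the special fiber of $x$ is the nodal graph curve $C_G$. The only point that requires mild care is the bookkeeping of connected components when passing between the singular limit $C_G(\R)$, the combinatorial data $\rho$, and the nearby smooth fibers, and that is exactly what the edge pairing $\rho$ and the cover $G_\rho \to G$ are designed to record.
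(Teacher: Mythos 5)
Your proposal is correct and follows essentially the same route the paper intends: the corollary is an immediate assembly of Proposition \ref{prop:determined} (identifying the ovals of $S$ with the cycles of $G_\rho$), the corollary to Theorem \ref{thm:trivial} (transferring the component count from $X_t(\R)$ to $Y(R)$), Harnack's bound, and the observation—recorded in the paper as Remark \ref{rmk:curveY}—that a smooth $Y$ with nodal special fiber $C_G$ is automatically Mumford with skeleton $G$ of genus $g$. No divergence from the paper's argument and no gaps.
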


\begin{example}[$g=5$]
Figure \ref{fig:G81d} shows a maximal curve (right) and its graph curve (left)
derived from Example \ref{ex:twelve}. The graph curve $C_G$ consists of eight
circles in $\PP^4(\RR)$. The smooth curve $S$ consists of six pairwise
disjoint ovals. Their colors in Figures \ref{fig:G81b} and \ref{fig:G81d} match.
\end{example}

We now state the main result in this section.
 Theorem \ref{thm:main34}
 is a purely graph-theoretic result, but it
has significant implications for the
topology of real canonical curves in~$\PP^{g-1}$.

\begin{theorem} \label{thm:main34}
A  graph $G$ of genus $g$ as above admits
a double cover $\,G_\rho \rightarrow G$ by $g+1$ cycles
if and only if $G$ is planar. In this case, there is a 
unique edge pairing $\rho$ with this property.
\end{theorem}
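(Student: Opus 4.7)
The plan is to realize every edge pairing $\rho$ of $G$ as the face structure of a $2$-cell embedding of $G$ into some closed surface $\Sigma_\rho$, with the cycles of $G_\rho$ serving as the face boundaries. The Euler characteristic of $\Sigma_\rho$ then controls $c(\rho) := $ number of cycles of $G_\rho$, and Whitney's classical uniqueness theorem for $3$-connected planar embeddings (one of the 1930's results of Whitney and MacLane \cite{MacLane1937} alluded to in the introduction) will take care of the uniqueness of $\rho$.

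First I would build, from $\rho$, a $2$-complex $X_\rho$ by attaching one closed $2$-cell to $G$ along each cycle of $G_\rho$, viewed as a closed walk in $G$ via the projection $G_\rho \to G$ of (\ref{eq:2to1map}). A local check should show that $X_\rho$ is a closed $2$-manifold (possibly non-orientable): at a trivalent vertex $v$ with adjacent edges $e_i,e_j,e_k$, the three vertices $\{e_i,e_j\}, \{e_i,e_k\}, \{e_j,e_k\}$ of $G_\rho$ above $v$ play the role of the three angular sectors at $v$ and close up into a disk neighborhood of $v$; along an edge $e$, the pairing $\rho(e)$ matches corners at the two endpoints, placing $e$ on the boundary of exactly two face-cells (with multiplicity). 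With $V = 2g-2$, $E = 3g-3$ and $F = c(\rho)$, Euler's formula then gives
\begin{equation*}
 \chi(X_\rho)\,=\,V-E+F\,=\,F - g + 1.
\end{equation*}
Every closed surface satisfies $\chi \le 2$ with equality only for $S^2$, so $c(\rho) = g+1$ if and only if $X_\rho \cong S^2$. In that case the face structure of $X_\rho$ is a planar embedding of $G$; conversely, any planar embedding of $G$ directly produces an edge pairing $\rho$ with $c(\rho)=g+1$.

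For the uniqueness clause, I would invoke Whitney's theorem: a $3$-connected planar graph embeds in $S^2$ uniquely up to reflection. The pairing $\rho(e)$ records only which pairs of edges at the endpoints of $e$ lie on a common face with $e$, and this data is unchanged by reflecting the embedding. Hence the $\rho$ constructed above is the only pairing attaining $g+1$ cycles. The main technical hurdle is the manifold claim for $X_\rho$, but trivalence makes it essentially automatic: every pair of distinct edges at $v$ already represents one of the three sectors at $v$, so the sectors around $v$ always close up into a disk once the face-cells prescribed by $\rho$ are glued on, and each edge of $G$ always ends up flanked by exactly two such cells.
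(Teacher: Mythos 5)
Your proof is correct, and it takes a genuinely different route from the paper's for the main equivalence. The paper splits the work: it first proves, by a linear-algebra argument in the cycle space over $\mathbb{Z}/2\mathbb{Z}$, that \emph{any} double cover of $G$ by cycles has at most $g+1$ members and that a cover attaining the bound consists of simple cycles; it then deduces planarity from Mac Lane's criterion \cite{MacLane1937}, and only for the uniqueness statement does it perform the disk-gluing and Euler characteristic computation, finishing with the characterization of the face cycles of a $3$-connected planar graph in \cite[Proposition 4.2.7]{Diestel}. You instead make the surface $X_\rho$ carry the entire argument: the bound $c(\rho)\le g+1$ falls out of $\chi(X_\rho)\le 2$, equality identifies $X_\rho$ with $\mathbb{S}^2$ and hence yields a planar embedding directly, and Whitney's uniqueness theorem (essentially equivalent to the Diestel citation) gives the uniqueness of $\rho$, since the facial walks of the unique embedding determine and are determined by the edge pairing. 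The one step you must write out in full is the verification that $X_\rho$ is a closed surface for \emph{every} edge pairing, including those whose cycles project to non-simple closed walks in $G$; your link computation is the right one --- the link of a vertex is a triangle because each of the three corners contributes one arc joining two of the three edge-ends, and the link of an interior point of an edge is two points joined by the two arcs coming from the two lifts of that edge in $G_\rho$ --- and trivalence is exactly what makes the vertex links circles with no rotation system needed. What you lose relative to the paper is the slightly stronger combinatorial fact that arbitrary double covers (not only those induced by edge pairings) have at most $g+1$ cycles, with maximal ones automatically simple; what you gain is a single unified topological mechanism, the bound for free from the classification of surfaces, and independence from Mac Lane's criterion.
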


\begin{proof}
A cycle of $G$ is a sequence of edges that starts and ends at the same vertex, i.e.
$$v_1 \to v_2 \to \cdots \to v_m \to v_{m+1} = v_1, $$
where $\{v_i,v_{i+1}\} $ are edges of $G$.
A cycle is \emph{simple} if $m \geq 3$ and the vertices $v_1,v_2,\ldots,v_m$ are distinct.
 A {\em double cover} of $G$ is a set $\C$ of cycles
such that each edge of $G$ appears in precisely  two cycles in $\C$.
We first prove the following claim: {\em
For any double cover $\C$ of the genus $g$ graph $G$, we have $|\C| \leq g + 1$. 
If $\,|\C| = g + 1$, then all the cycles in $\C$ are simple.}

	By splitting the cycles in $\C$ that are not simple, we construct a set $\C'$ of simple cycles of $G$ such that each edge appears precisely twice in these cycles. Then 
	 $|\C'| \geq |\C|$.
		For every vertex $v $ of $G$, let $\C'_v $ be the triple of cycles in $\C'$ that contain $v$. 
	Let $\D \subset \C'$ be a non-empty subset of cycles that add up to zero modulo $2$ in the cycle space.
	For every $v \in V$, either $\C'_v \subset \D$ or $\C'_v \cap \D = \varnothing$ holds.
 Therefore, if a vertex $v$ appears in $\D$, so do all   its neighbors and all the cycles $\C'_v$ containing $v$. Since $G$ is connected, this implies that $\D = \C'$.
 We conclude that
every proper subset of $\C'$ is linearly independent in the cycle space modulo $2$.
 Since the cycle space of $G$ is $g$-dimensional, we have
 $|\C'| \leq g+1$.  This implies the inequality $|\C| \leq |\C'| \leq g + 1$.
  Moreover, if equality holds, then $\C = \C'$. So, the claim is proved.

We now prove the first assertion in Theorem \ref{thm:main34}.
Suppose that
$G$ is planar. Fix an embedding of $G$ into the $2$-sphere $\mathbb{S}^2$.
	We obtain a double cover of $G$ by considering the cycles that bound 
	the regions of  $\mathbb{S}^2 \backslash G$. 
	For the converse, let $\C$  be any double cover of $G$ by $g+1$ cycles.
By the claim in the previous paragraph, these cycles are simple and $g$ of them form 
a basis for the cycle space of $G$.
	The planarity of $G$ then follows from Mac Lane's Theorem \cite[Theorem I]{MacLane1937}.
	This theorem states that a graph $G$ is planar if and only if its cycle space has a basis consisting of simple cycles in which every edge appears at most twice.
	
For the second assertion in Theorem \ref{thm:main34}, we fix any
double cover $\C$ of $G$ by $g+1$ cycles. We claim that $G$ admits an embedding into
the sphere $\mathbb{S}^2$ such that the cycles in $\C$ bound the regions of $\mathbb{S}^2\backslash G$.
 For each cycle $C \in \C$, consider a closed disk whose boundary is $C$. 
 Next we glue these closed disks along common edges. The result is (homeomorphic to) a closed $2$-dimensional manifold. Indeed, smoothness along edges holds because each edge appears in precisely two cycles. Smoothness at vertices holds because 
 the graph $G$ is assumed to be trivalent. The Euler characteristic of our surface~is
	$$\chi \,=\, V - E + F \,= \,(2g-2) - (3g-3) + (g+1) \,= \,2. $$
	
There are no non-orientable closed surfaces with $\chi = 2$. Hence our
surface is the sphere $\mathbb{S}^2$. We have constructed an
embedding of $G$ into $\mathbb{S}^2$ where the faces are bounded by our cycles.
In particular, $G$ is a planar graph.
		Finally, the uniqueness of the set $\mathcal{C}$ of cycles is guaranteed by \cite[Proposition 4.2.7]{Diestel}, which states that the cycles bounding the faces of a  
$3$-connected planar graph are precisely those cycles  which do not disconnect the graph.
\end{proof}

\begin{remark}  Since our graph $G$ is always trivalent, such a double cover of $G$
is equivalent to the structure of a 
{\em ribbon graph} on $G$.
Ribbon graphs are important in the theory of
Riemann surfaces; see for example~\cite[Section 1]{MP}.
There the emphasis is on geometry and analysis.
By contrast,  the present paper focuses on algebra
and symbolic computation.
Ribbon graphs arise from embedding graphs
into surfaces, a topic to be discussed in
Remark~\ref{rem:nottheonly}.
\end{remark}

\section{Hyperplane Arrangements at the Hilbert Scheme}
\label{sec4}

Let $H_g$ denote the Hilbert scheme of curves of genus $g$ and degree $2g-2$ in $\PP^{g-1}$.
We have $H_3 = \PP^{14}$, but $H_g $ has many irreducible components for $g \geq 4$.
A description for $g=4$ is found in \cite{Rez}. We are interested in the distinguished
component of $H_g$  which contains all 
smooth canonical curves. This component is known
to have dimension $\, g^2 + 3g-4$; see e.g.~\cite[Corollary 1.45]{moduli}. This number is the sum
of $g^2 -1 = {\rm dim}({\rm PGL}(g))$  and $3g-3 = {\rm dim}(\mathcal{M}_g)$.
These two contributions will be made manifest in our discussion of  real tangent spaces,
and in particular by the open cone in Theorem~\ref{thm:main}.
We begin with the following key lemma.

\begin{lemma} \label{lem:gabi}
Each graph curve $C_G$ defines a smooth point $x_0$ of the Hilbert scheme $H_g$.
\end{lemma}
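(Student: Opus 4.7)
The plan is to verify the standard deformation-theoretic criterion for smoothness of $H_g$ at $[C_G]$: namely $H^1(C_G, N) = 0$ for the normal sheaf $N := N_{C_G/\PP^{g-1}}$. The accompanying tangent-space dimension count $h^0(N) = g^2 + 3g - 4$ will fall out of the same calculation. The starting observation is that $C_G$ is a local complete intersection in $\PP^{g-1}$: away from nodes this is automatic, and at a node $p = L \cap L'$ one picks affine coordinates $x_1, \ldots, x_{g-1}$ around $p$ so that $L$ and $L'$ are the first two coordinate axes, whereupon $C_G$ is cut out locally by the regular sequence $x_1 x_2, x_3, \ldots, x_{g-1}$ of length $g-2$. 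In particular $N$ is locally free of rank $g-2$ and standard deformation theory identifies $T_{x_0} H_g = H^0(N)$ with obstructions in $H^1(N)$.

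The main tool is the four-term exact sequence on $C_G$ obtained by dualizing the conormal sequence:
\begin{equation*}
0 \,\to\, \mathcal{T}_{C_G} \,\to\, T_{\PP^{g-1}}|_{C_G} \,\to\, N \,\to\, T^1_{C_G} \,\to\, 0,
\end{equation*}
where $T^1_{C_G} := \mathcal{E}xt^1(\Omega_{C_G}, \mathcal{O}_{C_G})$ is a skyscraper of length one at each of the $3g-3$ nodes. To control the middle term I restrict the Euler sequence on $\PP^{g-1}$ to $C_G$, using that the embedding is canonical and therefore $\mathcal{O}_{C_G}(1) = \omega_{C_G}$:
\begin{equation*}
0 \,\to\, \mathcal{O}_{C_G} \,\to\, \omega_{C_G}^{\oplus g} \,\to\, T_{\PP^{g-1}}|_{C_G} \,\to\, 0.
\end{equation*}
The coboundary $H^1(\mathcal{O}_{C_G}) \to H^1(\omega_{C_G})^{\oplus g}$ is cup product against a basis of $H^0(\omega_{C_G})$, and is an isomorphism of $g$-dimensional spaces by the perfect Serre duality pairing (valid since $C_G$ is Gorenstein). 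Therefore $H^1(T_{\PP^{g-1}}|_{C_G}) = 0$ and $h^0(T_{\PP^{g-1}}|_{C_G}) = g^2 - 1$. Splitting the four-term sequence into $0 \to \mathcal{T}_{C_G} \to T_{\PP^{g-1}}|_{C_G} \to K \to 0$ and $0 \to K \to N \to T^1_{C_G} \to 0$ and chasing cohomology, together with $H^1(T^1_{C_G}) = 0$ for the skyscraper, then forces $H^1(N) = 0$.

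For the tangent-space dimension, Euler characteristics give
\begin{equation*}
\chi(N) \,=\, \chi(T_{\PP^{g-1}}|_{C_G}) - \chi(\mathcal{T}_{C_G}) + \chi(T^1_{C_G}) \,=\, (g^2 - 1) - \chi(\mathcal{T}_{C_G}) + (3g - 3).
\end{equation*}
Using the standard identification $\mathcal{T}_{C_G} \cong \nu_* \bigl(\mathcal{T}_{\widetilde{C}_G}(-B)\bigr)$, where $\nu\colon \widetilde{C}_G \to C_G$ is the normalization and $B \subset \widetilde{C}_G$ is the preimage of the nodes, Riemann-Roch on each $\PP^1$-component of $\widetilde{C}_G$ yields $\chi(\mathcal{T}_{C_G}) = \sum_i (3 - n_i)$, where $n_i = 3$ is the number of nodes on the $i$-th component. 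Trivalence gives $\sum_i n_i = 2(3g-3)$ and $\sum_i 3 = 3(2g-2)$, so $\chi(\mathcal{T}_{C_G}) = 0$. Combining, $h^0(N) = \chi(N) = g^2 + 3g - 4$.

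The main obstacle is the delicate local behavior at the $3g-3$ nodes, where $\mathcal{T}_{C_G}$ and $T^1_{C_G}$ both require care; this is handled by passing to the normalization for the Euler-characteristic computation and by invoking Gorenstein Serre duality for the cohomology of $T_{\PP^{g-1}}|_{C_G}$. Once the four-term sequence and the canonical Euler sequence are set up, the rest is a routine cohomological diagram chase.
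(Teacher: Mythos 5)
Your proof is correct and follows essentially the same route as the paper: the sheaf you call $K$ is the paper's $\mathcal{N}'_C$, and both arguments kill $H^1(\mathcal{N}'_C)$ via the Euler sequence, the canonical embedding, and Serre duality, then conclude $H^1(N)=0$ from the Lichtenbaum--Schlessinger skyscraper sequence. The only divergence is that the paper invokes \cite[Proposition 1.1]{HH} (which also asks for surjectivity of $H^0(N)\to H^0(T^1)$), whereas you bypass this by using the local-complete-intersection property to place obstructions directly in $H^1(N)$, and you additionally derive the dimension count $g^2+3g-4$ that the paper cites from \cite{moduli}.
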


\begin{proof}
 By the Jacobi criterion for smoothness, it suffices to show smoothness 
 after base change to  $\CC$.
The proof rests on standard arguments in deformation theory, starting from
the ideal sheaf $\mathcal{I}_C$ and structure sheaf $\mathcal{O}_C$
of our graph curve $C= C_G$. The {\em normal sheaf}
$\mathcal{N}_C = {\rm Hom}_{\mathcal{O}_C}\bigl( \mathcal{I}_C / \mathcal{I}_C^2,\mathcal{O}_C \bigr) $
is locally free. We consider the subsheaf $\,\mathcal{N}_C'\,$ 
of $\,\mathcal{N}_C\,$ that is
 the image of the natural map from the restriction of the tangent sheaf $ \mathcal{T}_{\mathbb{P}^{g-1}}$ of $\mathbb{P}^{g-1}$. This map~is
$$ \mathcal{T}_{\mathbb{P}^{g-1}}\otimes\mathcal{O}_C \,
\longrightarrow \, \mathcal{N}_C. $$
The cokernel of this map is a skyscraper sheaf, known as the {\em Lichtenbaum--Schlessinger sheaf}:
$$ 0 \, \longrightarrow \, \mathcal{N}_C' \,
 \longrightarrow \, \mathcal{N}_C \,  \longrightarrow \,  {\textrm{T}}^1_{{\rm Sing}(C)} \,\simeq \, \CC^{3g-3}
 \,\longrightarrow \,0.
 $$
 As shown in the proof of \cite[Proposition 1.1]{HH}, it suffices to prove that  $H^1(\mathcal{N}_C)=0$ and the map $H^0( \mathcal{N}_C) \rightarrow H^0({\mathcal{T}}^1_{{\rm Sing}(C)} )$  is surjective.
 The latter is true since every node of $C$ can be deformed separately, as in the proof of Lemma  \ref{lem:arrangement} below.
 In order for $H^1(\mathcal{N}_C)=0$, it suffices to show 
 $H^1(\mathcal{N}'_C) = 0$.  This vanishing statement can be derived as in
 \cite[Corollary 1.2]{HH}. For this we
use the isomorphism  $H^1(\mathcal{O}_C) \simeq H^0(\mathcal{O}_C(1)^\vee)$,
obtained from the Euler sequence on $\PP^{g-1}$ restricted to $C$,
and the fact  that $C$ is canonically embedded \cite[Corollary 2.2 (ii)]{BE}.
\end{proof}

We now fix a graph curve $C_G$ with corresponding point $x_0 \in H_g$.
Set $S = \RR[x_1,\ldots,x_g]$ 
and let $I_0 \subset S$ be the homogeneous radical ideal of $C_G$. 
The tangent space $T_{x_0}(H_g)$ can be computed 
(e.g.~in {\tt Macaulay2}) as
the degree zero part of the graded $S$-module
${\rm Hom}_S(I_0,S/I_0)$.
We  know from Lemma \ref{lem:gabi} that $ T_{x_0}(H_g)$ is a real vector space of dimension $g^2+3g-4$.

Recall that $A$ is the ring of algebraic power series 
and $D$ is the ring of dual numbers. Let $\eta \in T_{x_0}(H_g)$ be a tangent vector.
By Lemma~\ref{lem:gabi}, $x_0$ is a smooth point.
By Lemma~\ref{lem:smoothtangent}, there exists a point $a\in H_g(A)$ whose image under the 
map $H_g(A)\to H_g(D)$ equals $(x_0,\eta)$. The natural homomorphism $A\to\mathcal{O}$  induces a map $H_g(A)\to H_g(\mathcal{O})$. This maps $a$ to a point $x\in H(\mathcal{O})$ whose special fiber is $x_0$. Let $y$ be its general fiber and $Y\subset \PP^{g-1}$ the corresponding curve over $R$.
This is a curve with special fiber $C_G$. 
 By Proposition  \ref{prop:determined}, if $Y$ is smooth over $R$ then its
      topology is determined by the induced edge pairing on $G$.
      In the next section we show
that the tangent vector~$\eta$ determines the edge pairing.
Here we record:

\begin{remark}
\label{rmk:curveY}
If $\,Y$ is smooth,  then it is a Mumford curve whose genus $g$ graph equals
$G$.
\end{remark}

We shall derive a condition on $\eta$ which guarantees that $Y$ is smooth. To this end, let $I_\epsilon \subset A[x_1,\ldots,x_g]$ denote the homogeneous vanishing ideal of the curve over $A$ corresponding to the point $a\in H_g(A)$. As before,
  $I_0\subset \R[x_1,\ldots,x_g]$ is the ideal of our graph curve $X_0$. We note that $I_0$ is the image of $I_\epsilon$ under the map $A[x_1,\ldots,x_g]\to\R[x_1,\ldots,x_g]$ 
  that is induced by the 
  quotient map $A \to A/(\epsilon) = \R$. If we write an element of $I_\epsilon$ in the form
\begin{equation*}
 f_0+\epsilon\cdot f_1+\epsilon^2\cdot h
\end{equation*}
for some $f_0,f_1\in\R[x_1,\ldots,x_g]$ and $h\in A[x_1,\ldots,x_g]$, then $f_0\in I_0$ and 
$f_1 \equiv
\eta(f_0)$ modulo $I_0$.

\begin{lemma} \label{lem:welldefined}
 Let $e$ be an edge of $G$ and $p_e\in X_0$ the corresponding node of $X_0 = C_G$. Let $L_1, L_2\subset X_0$ be the lines that intersect at $p_e$ and $W_e$   the plane spanned by
 $L_1$ and $L_2$ in $\PP^{g-1}$.
  Fix two linear forms  $l_1$ and $l_2$ in $\RR[x_1,\ldots,x_g]$ such that $L_i = V(l_i) \cap W_e$ for  $i=1,2$.
 \begin{enumerate}
  \item[(1)] There exists a polynomial $f \in \R[x_1,\dots, x_g]$ such that $f\cdot l_1\cdot l_2\,\in\, I_0$ and $f(p_e)\neq0$.
  \item[(2)] Let $\sigma\colon I_0\to\R[x_1,\ldots,x_g]/I_0$ be a homomorphism of graded modules of degree zero. If $\sigma(f\cdot l_1\cdot l_2)(p_e)=0$, then $\sigma(f'\cdot l'_1\cdot l'_2)(p_e)=0$ for every other choice of $f',l_1',l_2'$ as above.
 \end{enumerate}
\end{lemma}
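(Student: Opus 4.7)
The plan for part (1) is to exploit that the node $p_e$ lies on exactly two lines of $C_G$, and for part (2) to track how the expansion of $l_1 l_2$ in terms of $l_1' l_2'$ interacts with $\sigma$ via $S$-linearity.

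For part (1), since $G$ is a simple trivalent graph the node $p_e$ corresponds to a single edge and therefore lies on exactly the two lines $L_1, L_2$ of $C_G$. Hence $p_e \notin L_3 \cup \cdots \cup L_{2g-2}$, so the homogeneous ideal $J := I(L_3 \cup \cdots \cup L_{2g-2})$ is not contained in the homogeneous maximal ideal at $p_e$, and I can choose a homogeneous $f \in J$ with $f(p_e) \neq 0$. The polynomial $f\cdot l_1 \cdot l_2$ then vanishes on every line of $C_G$: on $L_1$ via $l_1$, on $L_2$ via $l_2$, and on each $L_i$ ($i\geq 3$) via $f$. Therefore $f l_1 l_2 \in I_0$, as required.

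For part (2), given another valid triple $f', l_1', l_2'$, the restrictions $l_i|_{W_e}$ and $l_i'|_{W_e}$ are nonzero linear forms on $W_e$ cutting out the same line $L_i$, and so differ by a nonzero scalar $c_i \in \R$. Set $\lambda_i := l_i - c_i l_i' \in I(W_e)$. Then $f\lambda_1, f\lambda_2 \in I_0$, since $\lambda_i$ vanishes on $W_e \supset L_1 \cup L_2$ while $f$ vanishes on $L_3, \ldots, L_{2g-2}$. Expanding
\[ f l_1 l_2 \,=\, c_1 c_2 \cdot f l_1' l_2' \,+\, c_1 \cdot l_1' \cdot (f \lambda_2) \,+\, c_2 \cdot l_2' \cdot (f \lambda_1) \,+\, \lambda_1 \cdot (f \lambda_2) \]
exhibits $f l_1 l_2$ as a sum of four elements of $I_0$. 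Applying $\sigma$ and using its $S$-linearity, each of the last three terms takes the form $g \cdot \sigma(h)$ with $g \in \{l_1', l_2', \lambda_1\}$ and $h \in \{f\lambda_1, f\lambda_2\} \subset I_0$. Since $l_1'(p_e) = l_2'(p_e) = 0$ (both primed forms vanish at the node) and $\lambda_1(p_e) = 0$ (as $p_e \in W_e$), evaluation at $p_e$ kills these cross terms, yielding
\[ \sigma(f l_1 l_2)(p_e) \,=\, c_1 c_2 \cdot \sigma(f l_1' l_2')(p_e). \]

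Finally, to trade $f$ for $f'$ on the right, I would apply $\sigma$ to the identity $f \cdot (f' l_1' l_2') = f' \cdot (f l_1' l_2')$, whose two parenthesized factors both lie in $I_0$. Pulling out $f$ resp.\ $f'$ by $S$-linearity and evaluating at $p_e$ produces $f(p_e) \cdot \sigma(f' l_1' l_2')(p_e) = f'(p_e) \cdot \sigma(f l_1' l_2')(p_e)$, so the two values vanish together since $f(p_e), f'(p_e) \neq 0$. Combined with the previous display, this yields $\sigma(f l_1 l_2)(p_e) = 0 \Leftrightarrow \sigma(f' l_1' l_2')(p_e) = 0$, proving part~(2). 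The main obstacle I anticipate is the bookkeeping: at each step one must verify that the product to which $\sigma$ is applied really lies in $I_0$ so that $S$-linearity may legitimately be invoked, and one must handle the degree mismatch between $f$ and $f'$ (which the last step accomplishes by multiplying through).
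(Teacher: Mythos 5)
Your overall strategy coincides with the paper's: part (1) via a polynomial vanishing on the other $2g-4$ lines, and part (2) by writing $l_i$ as a multiple of $l_i'$ plus a form $\lambda_i$ vanishing on $W_e$, killing the cross terms by evaluating at $p_e$ (where $l_1',l_2',\lambda_i$ all vanish), and finally trading $f$ for $f'$ via the symmetric identity $f\cdot(f'l_1'l_2')=f'\cdot(fl_1'l_2')$. Part (1) is correct as written.

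There is, however, one genuine gap in part (2). You justify $f\lambda_1,f\lambda_2\in I_0$ by asserting that $f$ vanishes on $L_3,\ldots,L_{2g-2}$. That holds for the particular $f$ you constructed in part (1), but part (2) must hold for an \emph{arbitrary} $f$ satisfying only $f\,l_1 l_2\in I_0$ and $f(p_e)\neq 0$ --- this full generality is what makes the hyperplane $h_e$ well-defined independently of all choices. Such an $f$ need not vanish on every other line: since $I_0$ is radical, $f\,l_1 l_2\in I_0$ only forces each $L_j$ (for $j\geq 3$) to lie in $V(f)\cup V(l_1)\cup V(l_2)$, and the hyperplane $V(l_1)$ may perfectly well contain some $L_j$ while still satisfying $V(l_1)\cap W_e=L_1$; in that case $f$ can be nonzero on $L_j$, the product $f\lambda_1$ need not lie in $I_0$, and $\sigma$ cannot be applied to it. The paper circumvents this by introducing an auxiliary polynomial $h$ with $h(p_e)\neq 0$ and $h\cdot q_j\in I_0$ for linear forms $q_j$ cutting out $W_e$, and multiplying the entire expression by $h$ before applying $\sigma$. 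Equivalently, you could begin by replacing $f$ with $fh$, which changes neither the hypotheses nor the vanishing condition, since $\sigma(fh\,l_1l_2)(p_e)=h(p_e)\cdot\sigma(f\,l_1l_2)(p_e)$ with $h(p_e)\neq 0$. With that one insertion, your argument goes through.
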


\begin{proof}
 Part (1) holds because $X_0$ and $L_1\cup L_2$ agree on a 
  Zariski open neighborhood of $p_e$. For part (2) let $q_1,\ldots,q_{g-3}$ be linear forms that cut 
  out the plane $W_e$. There exists a polynomial $h$ with $h(p_e)\neq0$ such that $h\cdot q_j\in I_0$ for all $j=1,\ldots,g-3$.
 We can write $\,   l_i\,=\,l_i'+Q_i\,$
 where each $Q_i$ is a linear combination of the linear forms $q_j$. The homomorphism $\sigma$ satisfies
 \begin{eqnarray*}
    &\sigma \bigl( f l_1 l_2 \bigr)(p_e  )\,=\,0\\
   \Rightarrow & \sigma\bigl( \, f (l_1'+Q_1) (l_2'+Q_2) \,\bigr)(p_e)\,=\,0\\
   \Rightarrow & \sigma\bigl(\,f h \,(l_1' l_2'+Q_1l_2'+ l_1' Q_2+Q_1Q_2)\,\bigr)(p_e)\,=\,0\\
   \Rightarrow & \bigl(\sigma( \,f h l_1' l_2')\,+\,\sigma(f h Q_1) \cdot l_2' \,+\,\sigma(f h  Q_2) \cdot l_1' \,+
   \,\sigma(f h Q_2)\cdot Q_1\, \bigr)(p_e)\,=\,0 \\
   \Rightarrow & \sigma\bigl( \,f h l_1' l_2' \,\bigr)(p_e)\,=\,0\\
   \Rightarrow & \bigl(f h\bigr)(p_e) \cdot \sigma\bigl( f' l_1' l_2' \bigr)(p_e)\,=\,0\\
   \Rightarrow & \sigma\bigl( f' l_1' l_2' \bigr)(p_e)\,=\,0.
 \end{eqnarray*}
This establishes the statement (2), and it completes the proof 
of Lemma \ref{lem:welldefined}.
\end{proof}

Lemma \ref{lem:welldefined} allows us to define a hyperplane arrangement 
in the tangent space $T_{x_0}(H_g)$. It consists of
$3g-3$ hyperplanes $h_e$, one for for every edge $e$ of $G$.
 We define $h_e$ to be the set of all elements $\sigma $ in
$T_{x_0}(H_g)$ such that $\sigma\bigl(f\cdot l_1\cdot l_2\bigr)(p_e)\,=\,0$.
The notation is as in Lemma~\ref{lem:welldefined}.

\begin{lemma} \label{lem:arrangement}
 For every edge $e$ of $G$, the set  $h_e$ is a hyperplane through the origin in $T_{x_0}(H_g)$.
  Moreover, the 
 $3g-3$ hyperplanes $h_e$ are in general position in $ T_{x_0}(H_g) \simeq
 \RR^{g^2-1} \oplus \RR^{3g-3}$.
\end{lemma}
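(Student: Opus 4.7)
The plan is to recognize the $3g-3$ linear forms defining the hyperplanes $h_e$ as, up to nonzero rescaling, the components of the Lichtenbaum--Schlessinger map at the nodes, whose surjectivity was already established in the proof of Lemma \ref{lem:gabi}. Concretely, for each edge $e$ define a linear functional $\ell_e\colon T_{x_0}(H_g)\to\R$ by $\ell_e(\sigma)\,=\,\sigma(f\cdot l_1\cdot l_2)(p_e)$, with $f,l_1,l_2$ as in Lemma \ref{lem:welldefined}. Part (2) of that lemma guarantees that $\ell_e$ is well-defined (independent of the choices), so $h_e=\ker(\ell_e)$. What remains to prove is then exactly two assertions: each $\ell_e$ is nonzero, and the $3g-3$ functionals $\ell_e$ are linearly independent.

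The key identification is local. On an affine chart around $p_e$ the ideal $I_0$ is generated by $l_1 l_2$ together with the linear forms $q_1,\ldots,q_{g-3}$ cutting out $W_e$, so near $p_e$ the node of $C_G$ is the plane singularity $\{l_1l_2=0\}\subset W_e$. The stalk of $\mathcal{T}^1_{\mathrm{Sing}(C)}$ at $p_e$ is the $1$-dimensional space $\mathcal{O}_{C,p_e}/(l_1,l_2)$, canonically identified with $\R$ by evaluation at $p_e$. A tangent vector $\sigma\in\mathrm{Hom}_S(I_0,S/I_0)_0$ deforms the local equation to $l_1 l_2+\epsilon\,\sigma(l_1 l_2)$, and the associated first-order deformation smooths the node precisely when the residue of $\sigma(l_1 l_2)$ at $p_e$ is nonzero. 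Since $f(p_e)\neq 0$ and $f\cdot l_1 l_2\in I_0$, one checks that $\ell_e(\sigma)=f(p_e)\cdot\sigma(l_1 l_2)(p_e)\bmod(l_1,l_2)$, so $\ell_e$ coincides, up to a nonzero scalar, with the $e$-th coordinate of the natural map
\begin{equation*}
    T_{x_0}(H_g)\,=\,H^0(\mathcal{N}_C)\,\longrightarrow\,H^0(\mathcal{T}^1_{\mathrm{Sing}(C)})\,\simeq\,\R^{3g-3}.
\end{equation*}

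With this identification in hand, Lemma \ref{lem:gabi} (whose proof explicitly used that every node can be smoothed independently) tells us that the joint map $(\ell_e)_e$ is surjective onto $\R^{3g-3}$. Equivalently, for each edge $e$ one can exhibit a tangent direction $\sigma_e\in T_{x_0}(H_g)$ smoothing the node $p_e$ while leaving the other $3g-4$ nodes intact, so $\ell_e(\sigma_e)\neq 0$ and $\ell_{e'}(\sigma_e)=0$ for $e'\neq e$. This gives both the non-vanishing of each $\ell_e$ (hence $h_e$ is genuinely a hyperplane) and the linear independence of the $\ell_e$'s (hence the $3g-3$ hyperplanes are in general position). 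A dimension count then yields $\dim\bigcap_e h_e=(g^2+3g-4)-(3g-3)=g^2-1$, matching the $\R^{g^2-1}$ direction coming from the $\mathrm{PGL}(g)$-orbit through $x_0$; the short exact sequence
$0\to\mathcal{N}_C'\to\mathcal{N}_C\to\mathcal{T}^1_{\mathrm{Sing}(C)}\to 0$ used in the proof of Lemma \ref{lem:gabi} provides the splitting $T_{x_0}(H_g)\simeq\R^{g^2-1}\oplus\R^{3g-3}$ announced in the statement.

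The main obstacle is the bookkeeping in the local identification in the second paragraph: one must reconcile the algebraic description of $T_{x_0}(H_g)$ as the degree-zero part of $\mathrm{Hom}_S(I_0,S/I_0)$ with the sheaf-theoretic description as $H^0(\mathcal{N}_C)$, and verify that the stalk of the map $\mathcal{N}_C\to\mathcal{T}^1_{\mathrm{Sing}(C)}$ at $p_e$ is indeed computed, up to the unit $f$, by the evaluation $\sigma\mapsto\sigma(fl_1l_2)(p_e)$. Since Lemma \ref{lem:welldefined} has already absorbed the choice ambiguity of $f,l_1,l_2$, this reduces to a single local calculation on the affine chart containing $p_e$, after which the conclusion is immediate from the surjectivity statement that is already in hand.
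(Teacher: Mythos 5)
Your reduction of the lemma to the linear independence of the functionals $\ell_e(\sigma)=\sigma(f\,l_1l_2)(p_e)$, and the identification of these (up to the unit $f(p_e)$) with the components of $H^0(\mathcal{N}_C)\to H^0(\mathcal{T}^1_{\mathrm{Sing}(C)})$, matches the paper's point of view and is sound. The problem is where you get the surjectivity of that map. You cite it as ``already established in the proof of Lemma \ref{lem:gabi}'', but the paper's proof of Lemma \ref{lem:gabi} justifies exactly that surjectivity by the sentence ``every node of $C$ can be deformed separately, \emph{as in the proof of Lemma \ref{lem:arrangement} below}.'' So the fact you are importing is, in the paper's logical architecture, an output of the very lemma you are proving; as written your argument is circular. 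The sentence ``for each edge $e$ one can exhibit a tangent direction $\sigma_e$ smoothing the node $p_e$ while leaving the other $3g-4$ nodes intact'' is the entire mathematical content of the lemma, and you never exhibit such a $\sigma_e$. The paper does: it takes the conic $Z_t=V(l_1l_2+t\,l_1'l_2')\cap W_e$, where $l_1',l_2'$ cut out the lines through the four neighboring nodes, forms $X_t=(X_0\setminus(L_1\cup L_2))\cup Z_t$, checks via the dual graph $G/e$ and \cite[Proposition~III.56]{geometryofschemes} that this is a flat family of arithmetic genus $g$, and reads off that the induced $\eta_e$ lies in $h_{e'}$ for all $e'\neq e$ (the family is locally constant near $p_{e'}$) but not in $h_e$ (since $l_1'l_2'(p_e)\neq 0$). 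That construction, or a substitute for it, is what your proof is missing.

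There is a legitimate non-circular repair available to you, but you would have to say it explicitly: the short exact sequence $0\to\mathcal{N}_C'\to\mathcal{N}_C\to\mathcal{T}^1_{\mathrm{Sing}(C)}\to 0$ gives a long exact sequence in which the cokernel of $H^0(\mathcal{N}_C)\to H^0(\mathcal{T}^1_{\mathrm{Sing}(C)})$ injects into $H^1(\mathcal{N}_C')$, and the vanishing $H^1(\mathcal{N}_C')=0$ \emph{is} established independently in the proof of Lemma \ref{lem:gabi} (via the Euler sequence and \cite[Corollary 1.2]{HH}). Deriving surjectivity that way would make your argument self-contained and genuinely different from the paper's explicit geometric construction. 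As the proposal stands, however, the key existence statement is asserted rather than proved, and the citation offered for it points back to the lemma under proof.
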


\begin{proof}
By definition, $h_e$ is a linear subspace of codimension $\leq 1$ in $T_{x_0}(H_g)$.
We claim the following: {\em
  for every edge $e$ of $G$, there exists a direction $\eta_e\in T_{x_0}(H_g)$ such that $\eta_e\in\cap_{e'\neq e} h_{e'}$ but $\eta_e\not\in h_e$.}
 This claim implies both statements in the lemma. We shall now prove it.
 
 Fix an edge $e = \{v_1,v_2\}$ of $G$.
   As before, let $L_1, L_2$ be the two lines in $X_0$ that intersect at 
   $p_e$ and let $W_e$ be the plane they span in $\PP^{g-1}$.
     Fix linear forms $l_1,l_2\in\R[x_1,\ldots,x_g]$ such that $V(l_i) \cap W_e = 
      L_i$ for $i=1,2$. 
      We denote by $e_i$ and $e_i'$ the other edges adjacent to $v_i$ for $i=1,2$ and by $p_{e_1},p_{e_1'},p_{e_2},p_{e_2'}$ the corresponding nodes of $X_0$. Furthermore, 
      let $l_1',l_2'$ be linear forms that cut out the lines $L_1'$ and $L_2'$ passing through $p_{e_1},p_{e_2}$ and $p_{e_1'},p_{e_2'}$ respectively.
      
       For any $t\in\R$, we consider the quadratic form
 \begin{equation}
 \label{eq:Xt1}
  F_t\,\,:=\,\,l_1\cdot l_2\,+\,t\cdot l_1'\cdot l_2' \quad \in \,\,\, \RR[x_1,\ldots,x_g].
 \end{equation}
 It defines a quadratic curve $Z_t$ in the plane  $W_e$. 
 This conic is smooth for small nonzero $t$.
 
 This allows us to
   smooth the node $p_e$ of the graph curve $X_0$.
For small nonzero $t$, we set
 \begin{equation}
 \label{eq:Xt2}
  X_t\, \,\,:= \,\, (X_0\backslash(L_1\cup L_2))\cup Z_t.
 \end{equation}
 This is a curve of  degree $2g-2$  in $\PP^{g-1}$.
 It is the union of
 $2g-3$  smooth rational curves whose dual graph is the contraction $G/e$ of $G$ along $e$.
 The graph $G/e$ still has genus~$g$.  Hence the curve
  $X_t$ has arithmetic genus $g$. In particular,
  $X_t$ is in the Hilbert scheme $H_g$.
 
 It follows from \cite[Proposition~III.56]{geometryofschemes} that $X_t$ is a flat family of
 curves over the  $t$-line.
   Its special fiber at $t=0$ is $X_0$.
 This means that the family $X_t$
  induces a direction $\eta_e\in T_{x_0}(H_g)$. 
  This tangent direction satisfies the properties in the claim.
Namely,
  we have $\eta_e \not\in h_e$, because the quadric
    $l_1'\cdot l_2'$ does not vanish at $p_e$. Moreover,
    we have $\eta \in h_{e'}$ for all other edges $e'$ of $G$, because
    $X_t$ agrees with $X_0$ locally around the node $p_{e'}$.
    This completes the proof.
\end{proof}

We now present our main result in this section.
It refers to the curve $Y$ in Remark~\ref{rmk:curveY}.

\begin{theorem} \label{thm:makeMumford}
Let $\eta \in T_{x_0}(H_g)$ be a tangent vector that does not lie on any of
the $3g-3$ hyperplanes $\,h_e$. Then $Y$
is a smooth Mumford curve whose special fiber is the graph curve~$C_G$.
\end{theorem}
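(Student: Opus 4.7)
The plan is to use the hyperplane arrangement $\{h_e\}$ as a detector of smoothing at each node $p_e$ of $C_G$, then derive the Mumford property from the semistable structure of the special fiber. First I would identify each hyperplane $h_e$ with the kernel of a natural local deformation map. From the short exact sequence
$$ 0 \,\longrightarrow\, \mathcal{N}_C' \,\longrightarrow\, \mathcal{N}_C \,\longrightarrow\, \mathrm{T}^1_{\mathrm{Sing}(C)} \,\longrightarrow\, 0 $$
appearing in the proof of Lemma \ref{lem:gabi}, together with $H^1(\mathcal{N}'_C)=0$, one obtains a surjection
$$ T_{x_0}(H_g) \,=\, H^0(\mathcal{N}_C) \,\twoheadrightarrow\, \bigoplus_{e} T^1_{C_G, p_e}, $$
where each summand $T^1_{C_G, p_e} \cong \R$ is the one-dimensional local deformation space at the node. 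Unwinding the definition of $h_e$ via Lemma \ref{lem:welldefined}, I would verify that $h_e$ is precisely the kernel of the projection onto the $e$-th summand. The hypothesis $\eta \notin h_e$ for every $e$ then says that the image of $\eta$ in every local $T^1$ is nonzero.

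Second, I would verify smoothness of $Y$ over $R$. Work in affine coordinates near a node $p_e$: pick linear forms $q_1,\ldots,q_{g-3}$ cutting out the plane $W_e$, and $l_1,l_2$ so that $L_i=V(l_i)\cap W_e$. By Lemma \ref{lem:welldefined} and the hypothesis $\eta \notin h_e$, the ideal $I_\epsilon \subset A[x_1,\ldots,x_g]$ contains a lift of the form $f\,l_1 l_2 + \epsilon g + \epsilon^2 (\cdots)$ with $f(p_e),\,g(p_e)\ne 0$, together with lifts $q_j + \epsilon(\cdots)$ of the $q_j$. Dividing the first equation by $f$ locally near $p_e$ yields $l_1 l_2 + \epsilon h + \epsilon^2(\cdots)$ with $h(p_e)\ne 0$. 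Computing the Jacobian of these $g-2$ local equations at $(p_e,0)$ in the $g$-dimensional affine space with coordinates $(x_1,\ldots,x_{g-1},\epsilon)$, the $\epsilon$-derivative of the first equation is $h(p_e)\ne 0$, while the other rows contribute $g-3$ linearly independent gradients coming from the $q_j$. The Jacobian has full rank $g-2$, so the total family $\mathcal{Y}$ over $\mathrm{Spec}(A)$ is smooth at $p_e$. Away from the nodes, $C_G$ is already smooth, so $\mathcal{Y}$ is smooth there by openness. Base-changing to $\mathcal{O}$ and then to $R$, the generic fiber $Y$ is smooth.

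Third, I would conclude the Mumford property. The curve $C_G$ is nodal, its components are smooth rational curves, and every component meets three others (since $G$ is trivalent), so $C_G$ is semistable. The flat proper family $\mathcal{Y}\to\mathrm{Spec}(\mathcal{O})$ with smooth generic fiber $Y$ and special fiber $C_G$ is therefore a semistable reduction of $Y$, and the dual graph of $C_G$ is $G$ of genus $g$. By the definition in Section \ref{sec2}, $Y$ is a Mumford curve with special fiber $C_G$.

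The main obstacle is the identification in the first step: that the subspace $h_e$ defined via Lemma \ref{lem:welldefined} coincides with the kernel of the projection $T_{x_0}(H_g)\to T^1_{C_G, p_e}$. This requires matching the functional $\sigma \mapsto \sigma(f l_1 l_2)(p_e)/f(p_e)$ with the geometric quotient map to the local $T^1$ coming from the Lichtenbaum--Schlessinger sheaf, using the explicit description of $\mathcal{N}_C/\mathcal{N}_C'$ at a nodal point. Once this is in hand, both the Jacobian smoothness check and the Mumford conclusion become routine.
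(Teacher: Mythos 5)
Your argument is essentially correct, but it takes a genuinely different route from the paper's. The paper argues by contradiction directly on the generic fiber: it takes a hypothetical singular point $p\in Y(C)$ over the algebraic closure $C=R(\sqrt{-1})$, shows that its reduction must be a node $p_e$, proves by a valuation analysis and a $(g-2)\times(g-2)$ minor of the Jacobian of the fiber equations that all coordinates of $p$ have valuation at least $1$, and deduces $\eta(fx_1x_2)(p_e)=0$, i.e.\ $\eta\in h_e$ --- a contradiction. You instead establish that the total space $\mathcal{Y}$ over ${\rm Spec}(A)$ is regular at each node $p_e$, via a Jacobian computation in the $g$ coordinates including $\epsilon$, where $\eta\notin h_e$ supplies the nonzero $\epsilon$-partial; this is the classical deformation-theoretic picture of a first-order smoothing of a node. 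Your route is conceptually cleaner; the paper's avoids any discussion of the total space, of properness, or of passing between regularity and smoothness. Two remarks. First, the step you single out as the main obstacle --- identifying $h_e$ with the kernel of $T_{x_0}(H_g)\to T^1_{C_G,p_e}$ --- is not actually needed: by the very definition of $h_e$ via Lemma \ref{lem:welldefined}, the hypothesis $\eta\notin h_e$ says precisely that the coefficient of $\epsilon$ in the lift of $f\,l_1l_2$ inside $I_\epsilon$ does not vanish at $p_e$, which is all your rank computation uses. Second, the step that does require justification is the one you pass over quickly at the end: regularity of $\mathcal{Y}$ at the real nodes, together with smoothness of the morphism along the rest of the special fiber, yields smoothness of the generic fiber only after one observes that (by properness) every singular point of $Y$ over $C$ specializes into the special fiber, necessarily to some node $p_e$, and that its local ring on $\mathcal{Y}$ is then a localization of the regular local ring at $p_e$, hence regular, hence smooth in characteristic zero. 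With that supplied (and with the routine check that your $g-2$ local equations cut out $\mathcal{Y}$ near $p_e$, using that both have dimension two and a regular local ring is a domain), your proof goes through; your semistable-reduction argument for the Mumford property is exactly the content of Remark \ref{rmk:curveY}.
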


\begin{proof}
 Let $C=R(\sqrt{-1})$ be the algebraic closure of the real closed field $R$. The reduction~map 
$\,  \PP^{g-1}(C)\to\PP^{g-1}(\CC)\,$
 restricts to a reduction map $ Y(C)\to X_0(\CC)$. Suppose that $p\in Y(C)$ is a singular point. 
 Our goal is to derive a contradiction from this assumption.
 
 The reduction of $p$ is a singular point $p_e$ of $X_0(\CC)$,
  corresponding to an edge $e$ of $G$.
 After changing coordinates in $\PP^{g-1}$,
  the plane $W_e$ spanned by the lines $L_1 $ and $L_2$ intersecting at $p_e$ is the zero set of $x_3,\ldots,x_{g-1}$ and $L_i$ is defined in $W_e$ by $x_i=0$ for $i=1,2$. This implies
 $$ \qquad
  p\,=\,(\,p_1:p_2:\cdots:p_{g-1}:1\,) \quad
 \hbox{with $\,{\rm val}(p_j)>0\,$ for $\,j=1,\ldots,g-1$.}
 $$
 
 Fix  $f\in\R[x_1,\ldots,x_g]$ with $f(p_e)\neq0$ but $f\cdot x_1x_2\in I_0$ and $f\cdot x_j\in I_0$ for $j=3,\ldots,g-1$. We could take $f$ to be a product of linear forms corresponding to hyperplanes 
  through all lines of $X_0$ other than $L_1$ and $L_2$. Moreover, we have ${\rm val}(f(p)) = 0$ since
  $f(p_e) \neq 0$.
  
  We can choose polynomials
  $y_{12},y_3,\ldots,y_g\in A[x_1,\ldots,x_g]$ such that 
 $$ fx_1x_2+\epsilon y_{12}\in I_\epsilon \quad {\rm and} \quad fx_j+\epsilon y_{j}\in I_\epsilon
 \,\,\, {\rm for} \,\,j=3,\ldots,g-1.$$
  If ${\rm val}(p_j)<1$ for some $j=3,\ldots,g-1$, then $(fx_j+\epsilon y_{j})(p)$ 
  has valuation equal to ${\rm val}(p_j)<1$, which would imply 
  $(fx_j+\epsilon y_{j})(p) \not= 0$.
   Thus ${\rm val}(p_j)\geq 1$ for all $j=3,\ldots,g-1$. This implies 
 \begin{equation} \label{eq:jacobian}
    \left(\frac{\partial(fx_j+\epsilon y_j)}{\partial x_i}(p)\right)_{\!3\leq i,j\leq g-1}
  = \quad f(p)\cdot I\,+\,\epsilon M,
 \end{equation}
 where $I$ is the identity matrix of size $g-4$ and $M$ is a square matrix with entries in $A$.
    Note that the matrix on the right-hand side of (\ref{eq:jacobian}) is invertible over $A$ because ${\rm val}(f(p))=0$. 
 
 We claim that ${\rm val}(p_j)\geq 1$ for $j=1,2$. This will be proved by contradiction.
 Suppose that ${\rm val}(p_j) < 1$ for $j\in \{1,2\}$.
 Since $p$ is a singular point of $Y$, the Jacobian matrix of the $g-2$ polynomials
 $fx_1x_2+\epsilon y_{12}, fx_3+\epsilon y_3,\ldots,fx_{g-1}+\epsilon y_{g-1} \in I_\epsilon$
 has rank at most $g-3$ at $p$.
 
 Consider the $(g-2)\times(g-2)$ minor given by partials with respect to $x_j, x_3,\dots, x_{g-1}$. The expansion of its determinant contains the term $f(p)^{g-2} \cdot p_j$. This term comes from
  $$\frac{\partial(fx_1x_2+\epsilon y_{12})}{\partial x_j}(p)$$
 and the matrix entries with valuation $0$ in (\ref{eq:jacobian}). All other terms have strictly bigger valuation, so they cannot cancel it. Therefore, the determinant is non-zero, which contradicts the fact that the Jacobian has rank at most $g-3$ at $p$.
 So, we indeed have ${\rm val}(p_j)\geq 1$ for $j=1,2$. 
 
 These two valuation inequalities imply ${\rm val}(y_{12}(p))\geq 1$ because
 $\,  (fx_1x_2+\epsilon y_{12})(p)=0$.
  This in turn implies the inequality ${\rm val}(y_{12}(p_e))\geq1$.
  Therefore, the constant term $y_{12}^0$ of
     $y_{12}$ with respect to $\epsilon$ vanishes at $p_e$. 
  We conclude that the tangent vector $\eta \in T_{x_0}(H_g)$ satisfies
 \begin{equation*}
 \eta(fx_1x_2)(p_e) \, = \, y_{12}^0(p_e) \,=\, 0 . 
 \end{equation*}
 This means that $\eta$ lies in the hyperplane $h_e$, which
  is a contradiction to our assumption.
  \end{proof}

We close this section by 
relating our hyperplane arrangements
to the Introduction.

\begin{example}
We examine the discriminants for the  complete intersections in
Examples \ref{ex:g3}, \ref{ex:g4} and  \ref{ex:g5}.
The lowest order term in each discriminant reveals
the hyperplane arrangement $\{ h_e \}_{e \in G}$.
For instance, the discriminant for ternary quartics is
a polynomial $\Delta$ of degree $27$ in the $15$ coefficients.
When evaluated at (\ref{eq:genus3}),  it is a polynomial
of degree $27$ in~$\epsilon$, namely
$$ \Delta(f) \cdot \epsilon^{27} \,+\, \cdots \,+\, \bigl(\,\prod_e h_e \,\bigr) \cdot \epsilon^6. $$
For a quartic $f$ and a cubic $g$ in $\PP^3$, the discriminant $\Delta$
has degree $77 $, and we obtain
$$ \Delta(f,g) \cdot \epsilon^{77} \,+\, \cdots \,+\, \bigl(\,\prod_e h_e \,\bigr) \cdot \epsilon^9. $$
Finally, for genus $g=5$, we consider the discriminant of three quadrics
$f,g,h$ in $\PP^4$, which has degree $120$ in the $45$ coefficients,
and the specialization to our family  (\ref{eq:genus5b}) has the form
$$ \Delta(f,g,h) \cdot \epsilon^{120} \,+\, \cdots \,+\, \bigl(\,\prod_e h_e \,\bigr) \cdot \epsilon^{12}. $$
Here $\prod_e h_e$ is the product of the $12$ linear forms in (\ref{eq:12ineqs}).
In each case, the degree of $\Delta$ was derived using methods in
 \cite{GKZ}, and we used computer algebra to verify the formulas above.
\end{example}

\section{Constructing MM-Curves}
\label{sec5}

We saw in Theorem \ref{thm:makeMumford}
that $Y$ is a smooth curve whenever the corresponding tangent vector $\eta$ lies in the complement of the hyperplane arrangement that consists of the $3g-3$ hyperplanes $h_e$ for $e$ an edge of $G$.  The $3g-3$ hyperplanes are in general position, by 
Lemma~\ref{lem:arrangement}. Hence our arrangement divides the tangent space
to the Hilbert scheme into $2^{3g-3}$ open cones. 

We next describe a bijection between these open cones and the set of edge pairings $\rho$ of $G$.
This uses  the notation from
 Lemma \ref{lem:welldefined}. Namely, $e = \{v_1,v_2\}$ denotes
 an edge of $G$ and $p_{e}$ the corresponding node of
$X_0 = C_G$.
     Let $L_1, L_2\subset X_0$ be the  lines that intersect at $p_e$ and $W_e\subset \PP^{g-1}$ the plane spanned by $L_1$ and $L_2$. Let $l_1,l_2\in\R[x_1,\ldots,x_g]$ two linear forms such that 
$V(l_i) \cap W_e = L_i$     
     for $i=1,2$. Finally, let $f$ be a polynomial such that $F=f\cdot l_1\cdot l_2\in I_0$ and $f(p_e)\neq0$. 
We denote by $e_1,e_1'$ and $e_2,e_2'$ the other edges of $G$ adjacent to $v_1$ and $v_2$. 

We choose vectors $P_{e_1},P_{e_1'},P_{e_2},P_{e_2'} \in \R^g$ representing the nodes $p_{e_1},p_{e_1'},p_{e_2},p_{e_2'}$~such~that
\begin{equation}
\label{eq:bigP1}
 P_e \,\,= \,\,P_{e_1}+P_{e_1'}\\
\,\,=\,\,P_{e_2}+P_{e_2'}. 
\end{equation}
Finally, we fix the following small positive real number:
\begin{equation}
\label{eq:bigP2}
 t_0 \,\,\,:=\,\,\,\min \bigl\{\,t>0\,\,:\, \,f(P_e+t(P_{e_1}\!+\!P_{e_2}))=0 \,\,\,{\rm or} \,\,\,
   f(P_e+t(P_{e_1'}\!+\!P_{e_2'}))=0 \,\bigr\}.
\end{equation}
With these conventions, we can now specify an edge pairing.
Namely, we define
\begin{equation}
\label{eq:bigP3}
 \rho(e)\,\,=\,\, \begin{cases} 
\, \{\{e_1,e_2\},\{e_1',e_2'\}\} & {\rm if}\,\,\, \eta(F)(P_e)\cdot F(P_e+t_0(P_{e_1}\!+\!P_{e_2}))\,<\,0 , \\
\, \{\{e_1,e_2'\},\{e_1',e_2\}\} &  {\rm otherwise}. \end{cases}
\end{equation}

We are now equipped to prove our main theorem which was
stated in the Introduction.

\begin{proof}[Proof of Theorem \ref{thm:main}]
Any MM-curve in $\PP^{g-1}$ with special fiber $C_G$ 
induces a double cover of the graph $G$
by $g+1$ cycles. By Theorem \ref{thm:main34},
this does not exist unless $G$ is planar.
We now fix a graph $G$ that is planar. Let
$x_0$ be the point corresponding to $C_G$ in $H_g$.
Consider the hyperplane arrangement in 
$T_{x_0}(H_g) \simeq \RR^{g^2-1} \times \RR^{3g-3}$
that was constructed in Section \ref{sec4}.

This hyperplane arrangement divides the tangent space into $2^{3g-3}$ open convex cones
$\RR^{g^2-1} \times \RR^{3g-3}_{> 0}$.
Namely, there is one cone for each edge pairing $\rho$ of $G$ as described above.
Let $\rho$ be the unique edge pairing such that the graph
$G_\rho$ in  Theorem \ref{thm:main34} has $g+1$~cycles.

Every tangent vector $\eta$ in the cone indexed by $\rho$
lifts to a smooth Mumford curve $Y$ with special fiber $C_G$.
This is the content of Theorem \ref{thm:makeMumford}. The locus of such curves $Y$ 
is a mixed semialgebraic set of full dimension $g^2+3g-4$
in the main component of the Hilbert scheme $H_g$.
Hence its image in the moduli space $\mathcal{M}_g$
also has the full dimension $3g-3$.

In order to prove Theorem~\ref{thm:main}, it remains to show that
the Mumford curve $Y$ is maximal. In other words, 
we must show that the curve $Y$ over $R$ constructed in Section \ref{sec4}
matches the construction of the graph $G_\rho$ in Section \ref{sec3}.
This is what we shall do in  Proposition~\ref{prop:match}.
\end{proof}


\begin{proposition} \label{prop:match}
 The number of semialgebraically connected components of $Y(R)$ is the number of
 cycles in the graph $G_{\rho}$. If
 $\,G_\rho$ consists of $\,g+1$ cycles then  $\,Y$ is an MM-curve.
\end{proposition}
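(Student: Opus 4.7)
The plan is to reduce the topological count for $Y(R)$ to a combinatorial count on $G_\rho$, then to verify that the combinatorial edge pairing $\rho$ defined in (\ref{eq:bigP3}) agrees with the edge pairing induced geometrically by the smoothing of $C_G$.

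First I would combine the bijection (\ref{eq:bijgerms}) with the corollary following Theorem~\ref{thm:trivial}: the number of semialgebraically connected components of $Y(R)$ equals the number of connected components of the real curve $X_t(\R)$ for all sufficiently small $t>0$. Applying the surjection $\varphi$ from (\ref{eq:varphi}) to the family $X_t$ yields an induced edge pairing $\rho'$ of $G$, and Proposition~\ref{prop:determined} identifies $X_t(\R)$ topologically with the cycle graph $G_{\rho'}$. Since $G_{\rho'}$ is a disjoint union of cycles, both assertions of the proposition will follow once we verify that $\rho' = \rho$.

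The verification is local at each node. Fix an edge $e$ and recall from the proof of Theorem~\ref{thm:makeMumford} that the ideal $I_\epsilon$ contains an element of the form $F + \epsilon\, y_{12} + \epsilon^2 h$, where $F = f l_1 l_2$ and the constant term $y_{12}^0$ represents $\eta(F)$ modulo $I_0$. In the plane $W_e$ near $p_e$, the real curve $X_t$ for small $t>0$ is therefore well approximated by the hyperbola
\begin{equation*}
   f(P_e)\cdot l_1 l_2 \,+\, t\cdot \eta(F)(P_e) \,=\, 0,
\end{equation*}
whose two branches lie in the pair of opposite $(l_1,l_2)$-quadrants selected by the sign of $-\eta(F)(P_e)/f(P_e)$. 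Each branch connects exactly one pair of arms of $p_e$, and this pairing is by definition $\rho'(e)$. Using the normalization (\ref{eq:bigP1}) together with $l_1(P_{e_1})=0$ and $l_2(P_{e_2})=0$, one computes $l_1(Q_1)=t_0\, l_1(P_{e_2})$ and $l_2(Q_1)=t_0\, l_2(P_{e_1})$ for $Q_1 := P_e + t_0(P_{e_1}+P_{e_2})$, so $Q_1$ lies in the quadrant between the arms toward $p_{e_1}$ and $p_{e_2}$. The hyperbola occupies that quadrant, and hence $\rho'(e) = \{\{e_1,e_2\},\{e_1',e_2'\}\}$, if and only if the sign of $l_1 l_2$ in that quadrant coincides with the sign of $-\eta(F)(P_e)/f(P_e)$. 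After multiplying through by $f(P_e)$, which has constant nonzero sign near $p_e$, this is precisely the condition $F(Q_1)\cdot\eta(F)(P_e) < 0$ from (\ref{eq:bigP3}).

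The main obstacle is controlling these approximations rigorously: one must verify that the higher-order term $\epsilon^2 h$ and the other generators of $I_\epsilon$ do not perturb the arm-connection pattern near $p_e$ for small $t$, and that the test point $Q_1$ lies close enough to $p_e$ for the quadrant analysis to apply. The threshold $t_0$ specified in (\ref{eq:bigP2}) is chosen precisely so that $F$ does not vanish along the segment from $P_e$ to $Q_1$, hence $f$ and the sign of $l_1 l_2$ are constant there. Once this local analysis is carried out at each of the $3g-3$ nodes, we obtain $\rho = \rho'$, so the number of semialgebraically connected components of $Y(R)$ equals the number of cycles in $G_\rho$. When this number is $g+1$, the curve $Y$ is maximal, and since it is already Mumford by Theorem~\ref{thm:makeMumford}, it is an MM-curve.
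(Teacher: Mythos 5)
Your global reduction is exactly the paper's: pass from $Y(R)$ to $X_t(\R)$ for small $t$ via Theorem~\ref{thm:trivial} and its corollary, invoke Proposition~\ref{prop:determined} to identify $X_t(\R)$ with $G_{\rho'}$ for the geometrically induced pairing $\rho'$, and reduce everything to the local statement $\rho'(e)=\rho(e)$ at each node. Your sign bookkeeping for the test point $Q_1=P_e+t_0(P_{e_1}+P_{e_2})$ is also consistent with the paper's normalization (\ref{eq:wlog}). The problem is that the one step you yourself flag as ``the main obstacle'' --- justifying that the hyperbola $f(P_e)\,l_1l_2+t\,\eta(F)(P_e)=0$ in the plane $W_e$ correctly predicts how the branches of $X_t$ attach to the four arms, despite the $\epsilon^2 h$ term, the other generators of $I_\epsilon$, and the fact that $X_t$ need not lie in $W_e$ --- is the entire mathematical content of the local claim, and you do not supply an argument for it. As written, the proof asserts the conclusion of the hard step rather than proving it.

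The paper closes exactly this gap with a separating-hyperplane trick that avoids approximating the curve at all. Under the normalization $f(P_e)>0$, $l_1(P_{e_2})>0$, $l_2(P_{e_1})>0$, the linear form $l_1+l_2$ is positive on the arms $L_{ee_1},L_{ee_2}$ and negative on $L_{ee_1'},L_{ee_2'}$, so it suffices to show that $X_t(\R)$ misses $V(l_1+l_2)$ in a neighborhood of $P_e$: then no branch can join an unprimed arm to a primed one, which forces the pairing $\{\{e_1,e_2\},\{e_1',e_2'\}\}$. This is done by restricting the single ideal element $F+\epsilon\,\eta(F)+\epsilon^2 h\in I_\epsilon$ to $V(l_1+l_2)$, where $F=fl_1l_2$ becomes $-f\,l_1^2\le 0$; since the hypothesis of (\ref{eq:negi}) together with $F(Q_1)>0$ gives $\eta(F)(P_e)<0$, the restricted polynomial is strictly negative near $P_e$ and hence has no zeros there. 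If you replace your approximation paragraph with this argument (or an equivalent quantitative one), your proof becomes complete; without it, there is a genuine gap.
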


\begin{proof}
 For $i=1,2$, let $L_{ee_i}$ be the open segment in the circle $L_i$
 that is bounded by $P_e$ and $P_{e_i}$ and does not contain $P_{e_i'}$.
 We similarly define the segment $L_{ee_i'}$ in $L_i$. By construction, 
  \begin{equation}\label{eq:reps} \!\!\!
  L_{ee_i}\,=\,\bigl\{[\lambda P_e+\mu P_{e_i}]\,:\, \lambda,\mu>0 \bigr\}
  \,\,\, {\rm and} \,\,\,
  L_{ee_i'}\,=\,\bigl\{[\lambda P_e+\mu P_{e_i'}]\,:\, \lambda,\mu>0 \bigr\}
  \,\,\, {\rm in} \,\,\, \PP^{g-1}(\RR).
 \end{equation}
 Let $\varphi$ be the map in  (\ref{eq:varphi})
 from the smooth curve to the graph curve.
 As in Section \ref{sec3}, we write $L_{ee_i}'$ for the closure of the preimage of $L_{ee_i}$ under $\varphi$ and  similarly for  $L_{ee_i'}$. We need to prove:
 \begin{equation}\label{eq:negi}
 \hbox{If $ \,\eta(F)(P_e)\cdot F(P_e+t_0(P_{e_1} \!+\!P_{e_2}))<0\,$
then $\,L_{ee_1}' \cup L_{ee_2}'\,$ is connected.}
\end{equation}
 Equivalently, we claim that 
$L'_{ee_1}\cup L'_{ee_2}$ and
   $L'_{ee_1'}\cup L'_{ee_2'}$ are disjoint.
These are the two pieces of the curve in (\ref{eq:preimages}).
Then (\ref{eq:negi}) ensures that our inequalities identify the correct edge pairing~$\rho$.

   We first note that multiplying $f$, $l_1$ and $l_2$ by a nonzero constant 
   does not change the sign on the left in (\ref{eq:negi}). Hence we can assume without loss of generality that 
   \begin{equation}
   \label{eq:wlog}
   f(P_e)>0 \, ,\,\,\, l_1(P_{e_2})>0\,\,\, {\rm and} \,\,\, l_2(P_{e_1})>0. 
   \end{equation}
     The linear form $l_1+l_2$ is positive on $L_{ee_i}$ but negative on $L_{ee_i'}$ for $i=1,2$.
     Here it is essential to represent points in $\PP^{g-1}(\RR)$ by the
      vectors in $\RR^g$ that are shown in
      (\ref{eq:reps}). Because the Hausdorff limit of $\varphi_t(L_{ee_i}')$ is the closure of $L_{ee_i}$, and similarly for $L_{ee_i'}$,  it suffices to prove that the line $V(l_1+l_2)$ does not intersect $X_t(\R)$ in a neighborhood of $P_e$ for small  $t>0$. 
 
  Our assumption on $t_0$ together with (\ref{eq:wlog})
    implies  $F(P_e+t_0(P_{e_1}+P_{e_2})) > 0$.
This, together with the hypothesis in (\ref{eq:negi}),  yields $\,\eta(F)(P_e)<0$. 
The ideal $I_\epsilon$ of the $R$-curve $Y$ contains 
\begin{equation}
\label{eq:Fplus2terms}
 \quad F\,+\,\epsilon \cdot \eta(F)\,+\,\epsilon^2 \cdot h
\quad {\rm where} \,\,\, h\in A[x_1,\ldots,x_g]. 
\end{equation}
The restriction of this polynomial to the line $V(l_1+l_2)$
is equal to $\,  -f\cdot l_1^2+\epsilon \cdot \eta(F)+\epsilon^2\cdot h$.
This is strictly negative near $P_e$ because $f(P_e)>0$ and $\eta(F)(P_e)<0$.
Since (\ref{eq:Fplus2terms}) vanishes~on~$Y$, we conclude that
$V(l_1+l_2) \cap Y(R) = \emptyset $ in a neighborhood of $P_e$ and thus also $V(l_1+l_2) \cap X_t(\R) = \emptyset $ in a neighborhood of $P_e$ for small enough $t>0$. This completes the proof.
\end{proof}

\begin{remark} \label{rem:nottheonly}
   The number of  circles of $X(\R)$, here denoted $r = r(X)$, is not the only discrete invariant of a smooth real genus $g$ curve $X$. The other invariant, denoted $a=a(X)\in\{0,1\}$, is the parity of the number of connected components of $X(\CC)\backslash X(\R)$ modulo $2$. The relation of $a$ to the number of circles $r$ and the genus $g$ of $X$ is given by the following two facts:
   \begin{enumerate}
       \item[(1)] If $r=g+1$, i.e. $X$ is an M-curve, then $a=0$. \vspace{-0.18cm}
       \item[(2)] If $a=0$, then $r$ is congruent to $g+1$ modulo $2$.
   \end{enumerate}
   We sketch, without giving proofs, how to determine the invariant a from our combinatorial data. Recall from the proof of Theorem \ref{thm:main34} that from an edge pairing $\rho$ we obtain a closed surface $S$ by gluing closed disks whose boundaries are the cycles of $G_\rho$ along edges from the same fiber of $G_\rho\to G$. Then $a=0$ if $S$ is orientable and $a=1$ otherwise. Note that this is consistent with
the   statements (1) and (2). Indeed, if $G_\rho$ has $r$ connected components, then the Euler characteristic of $S$ equals $r+1-g$. Since every closed surface of Euler characteristic $2$ is orientable, we have $a=0$ whenever $r=g+1$. Furthermore, every orientable closed surface has even Euler characteristic.
Thus, if $a=0$ then $r+1-g$ must be divisible by $2$.
\end{remark}

\section{Computations}
\label{sec6}

The following informal algorithm summarizes the construction of MM-curves in Section~\ref{sec5}.

\begin{algo} \label{alg:five}
The {\bf input} is a $3$-connected trivalent planar graph $G$ of genus $g$.
The {\bf output} is an {\em adapted basis} of the tangent space $T_{x_0}(H_g)$,
where $x_0$ is the point of the Hilbert scheme $H_g$ given by the
graph curve $C_G$.
Here ``adapted'' refers to the cone in Theorem~\ref{thm:main}:
\begin{equation}
\label{eq:MMcone} \RR^{g^2-1} \,\times \,\RR^{3g-3}_{> 0}. 
\end{equation}
The first $g^2-1$ vectors in our basis span the subspace $\RR^{g^2-1}$.
They are obtained by applying the $g^2$ differential operators $x_i \frac{\partial}{\partial x_j}$
to the generators of the ideal $I \subset S$ of the curve $C_G$. This yields
$g^2$ vectors in  ${\rm Hom}_S(I, S/I)_0$.
They satisfy one linear relation, given by the Euler operator
$\sum_{i=1}^g x_i \frac{\partial}{\partial x_i}$. We therefore omit the
last vector, i.e. the one that arises from $\,x_g \frac{\partial}{\partial x_g}$.

The remaining $3g-3$ basis directions are unique modulo the
subspace $\RR^{g^2-1}$ which arises from ${\rm PGL}(g)$.
We seek one basis vector $\eta_e$ for each edge $e$ of $G$.
The $\eta_e$ are constructed to span the factor $\RR^{3g-3}_{> 0}$ in 
the MM-cone (\ref{eq:MMcone}). We discuss two methods for computing them.

The first method uses the deformed quadric in  (\ref{eq:Xt1}).
We compute the ideal of the curve $X_t$ in  (\ref{eq:Xt2}),
and we apply the operator $\frac{\partial}{\partial t}|_{t=0}$ to each
ideal generator. This resulting tangent vector represents
  the first-order deformation $\eta = \eta_e$ which smoothes the node $p_e$.
  The second method starts with any basis of 
  ${\rm Hom}_S(I, S/I)_0$
and extracts a basis modulo the $(g^2-1)$-subspace described above.
 For each edge $e$ of $G$,
we  compute the polynomial $F = f l_1 l_2 \in I$
and we require that $\eta(F)(P_{e'}) = 0$
for all edges $e'$ other than $e$. This is a 
system of $3g-4$ linear equations in $3g-3$ unknowns.
Its solution $\eta = \eta_e$ is unique up to a scalar multiple.

It remains to scale each vector $\eta_e$ so that it 
actually lies in the closure of our cone $\RR^{3g-3}_{> 0}$.
We do this by computing the objects in (\ref{eq:bigP1}) and (\ref{eq:bigP2}), and 
we then scale $\eta_e$  so that
\begin{equation}
\label{eq:correctsign}
 \eta_e(F)(P_e) \,\, = \,\, - \,{\rm sign}\bigl( F(P_e + t_0 (P_{e_1} + P_{e_2})) \bigr)\,\,
\in \,\,\{-1,+1\}. 
\end{equation}
By (\ref{eq:bigP3}) and (\ref{eq:negi}),
this is the correct choice for the unique edge pairing that yields MM-curves.
The resulting basis of 
$T_{x_0}(H_g) = {\rm Hom}_S(I, S/I)_0$
is essentially unique. It is adapted to~(\ref{eq:MMcone}).
\end{algo}

We next show how Algorithm \ref{alg:five}
is used to create MM-curves from planar graphs.
Going well beyond the examples in the Introduction, our task now is to deform
graph curves $C_G$ that are not complete intersections in $\PP^{g-1}$.
The smallest such graph $G$ has genus five.

\begin{example}[$g=5$] \label{ex:G81}
We fix the edge graph $G$ of the simple $3$-polytope with eight vertices
shown on the left in Figure \ref{fig:G81b}. This is the
graph with the label \ {\tt 4.16.a}  \ in \cite[Figure 1~(d)]{GHSV}.
On the right in Figure \ref{fig:G81b} we see the graph 
$G_\rho$ with  six cycles that double-covers $G$ as in (\ref{eq:2to1map}).

Using  $(a:b:c:d:e)$ for the coordinates on $\PP^4$,
the graph curve $C_G$ has the radical ideal
\begin{equation}
\label{eq:G81ideal}
 I  \, = \,
\bigl\langle
(c +d)(c+d-e), \,a (c+d-e),\,a(b+d),\,(d-e)(ae+bc+bd),\,b(b+d)(d-e)
\bigr\rangle \, . \quad
\end{equation}
The five minimal generators of $I$ are the $4 \times 4$ Pfaffians in the
skewsymmetric $5 \times 5$ matrix
\begin{equation}
\label{eq:55skew}
\begin{footnotesize} \begin{bmatrix}
          0    &    0 &     0     &      b+d   &    c+d-e  \\
          0    &    0 &     a     &     -c-d   &      0    \\
          0    &   -a &     0     &       0    &   b(d-e) \\
        -b-d   &   c+d &    0     &       0    &   e(d-e) \\
        -c-d+e &   0 &   -b(d-e) &   -e(d-e) &      0   
\end{bmatrix}. \end{footnotesize}
\end{equation}
This form of the syzygies shows that $C_G$ lies in the closure of the trigonal locus of $\mathcal{M}_5$.

The ideals of the lines in $C_G$ are the associated primes of $I$.
We label them as in Figure~\ref{fig:G81b}:
$$
\begin{matrix}
1: \langle   b + d, c + d, e \rangle &
2:  \langle  b + e, c, d - e \rangle &
3: \langle  a \! - \! d,  b \!+\! d,  c \! +\! d \! -\! e \rangle &
4: \langle a,b, c + d - e \rangle \\
5: \langle a,c, d - e \rangle & 
6: \langle a, c + e, d - e \rangle &
7: \langle a, b, c + d \rangle &
8: \langle  a,b + d, c + d  \rangle
\end{matrix}
$$
A planar drawing of the graph curve $C_G$ is shown on the left in
Figure \ref{fig:G81d}.  The $12$ nodes  are
$$ \begin{matrix}
p_{12} =  (1\!:\!0 \! :\! 0\! :\!0\!: \!0) & 
p_{13} = (1{:} {-}1{:} {-}1{:} 1{:} 0) & 
p_{23} =  (1{:} {-}1{:} 0{:} 1{:} 1) &
p_{25} = (0{:} {-}1{:} 0{:} 1{:} 1) \\
p_{34} = (0{:} 0{:} {-}1{:} 0{:} {-}1) &
p_{45} = (0\!:\! 0\!:\! 0\!:\! 1\!:\! 1) &
p_{56} = (0\!:\! 1\!:\! 0\!:\! 0\!:\! 0) & 
p_{47} = (0{:} 0{:} {-}1{:} 1{:} 0) \\ 
p_{67} =  (0\!:\! 0 \!:\! {-}1 \!: \! 1\!: \! 1) &
p_{68} = (0{:} {-}1{:} {-}1{:} 1{:} 1) & 
p_{78} = (0{:} 0{:} 0{:} 0{:} {-}1) & 
p_{18} = (0{:} {-}1{:} {-}1{:} 1{:} 0).
\end{matrix}
$$

\begin{figure}[h] \vspace{-0.21in}
$$ \!\!\!\!\!\!\!\!\!\!
\includegraphics[width = 8.9cm]{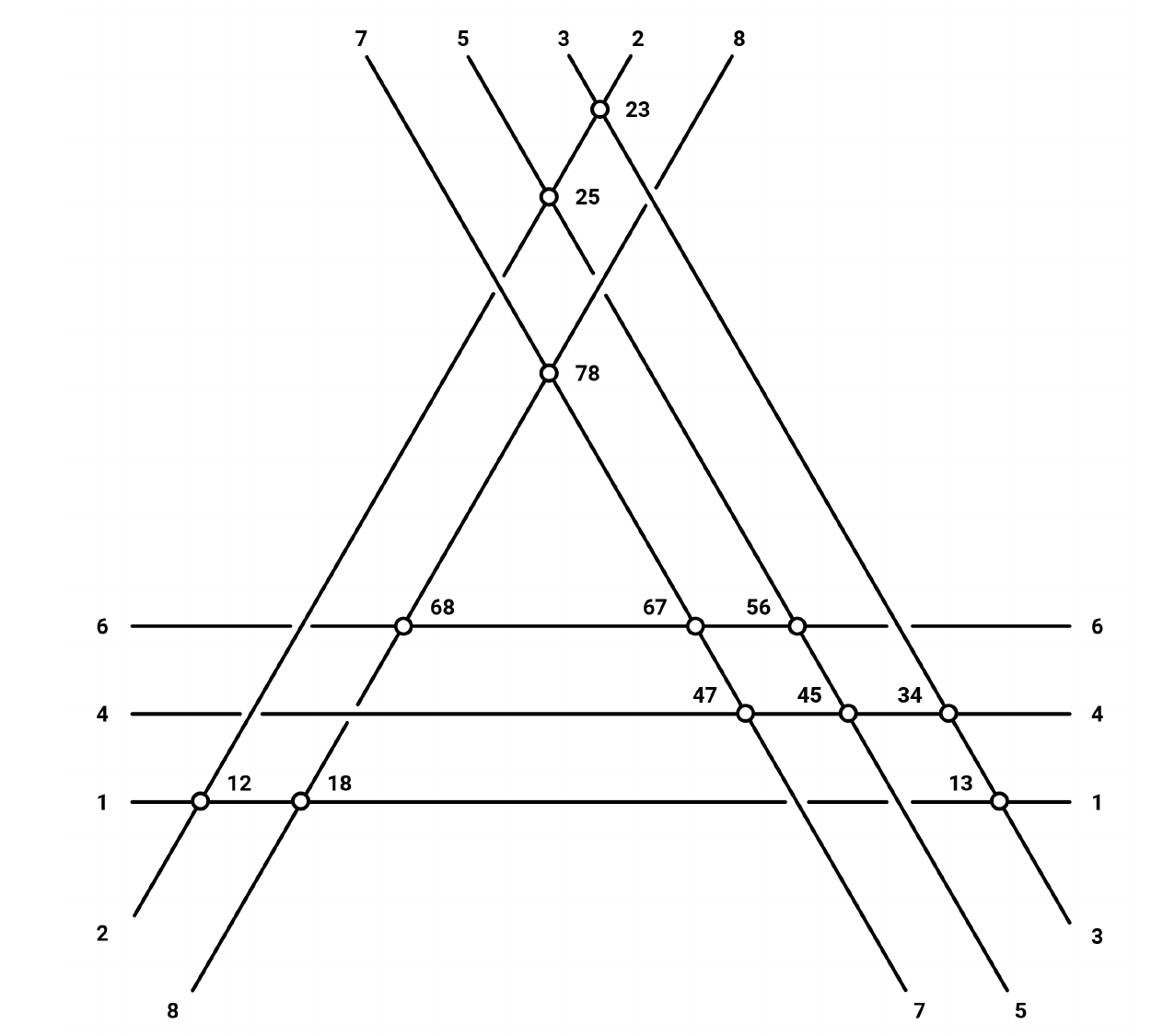} \,\,\,
\includegraphics[width = 8.9cm]{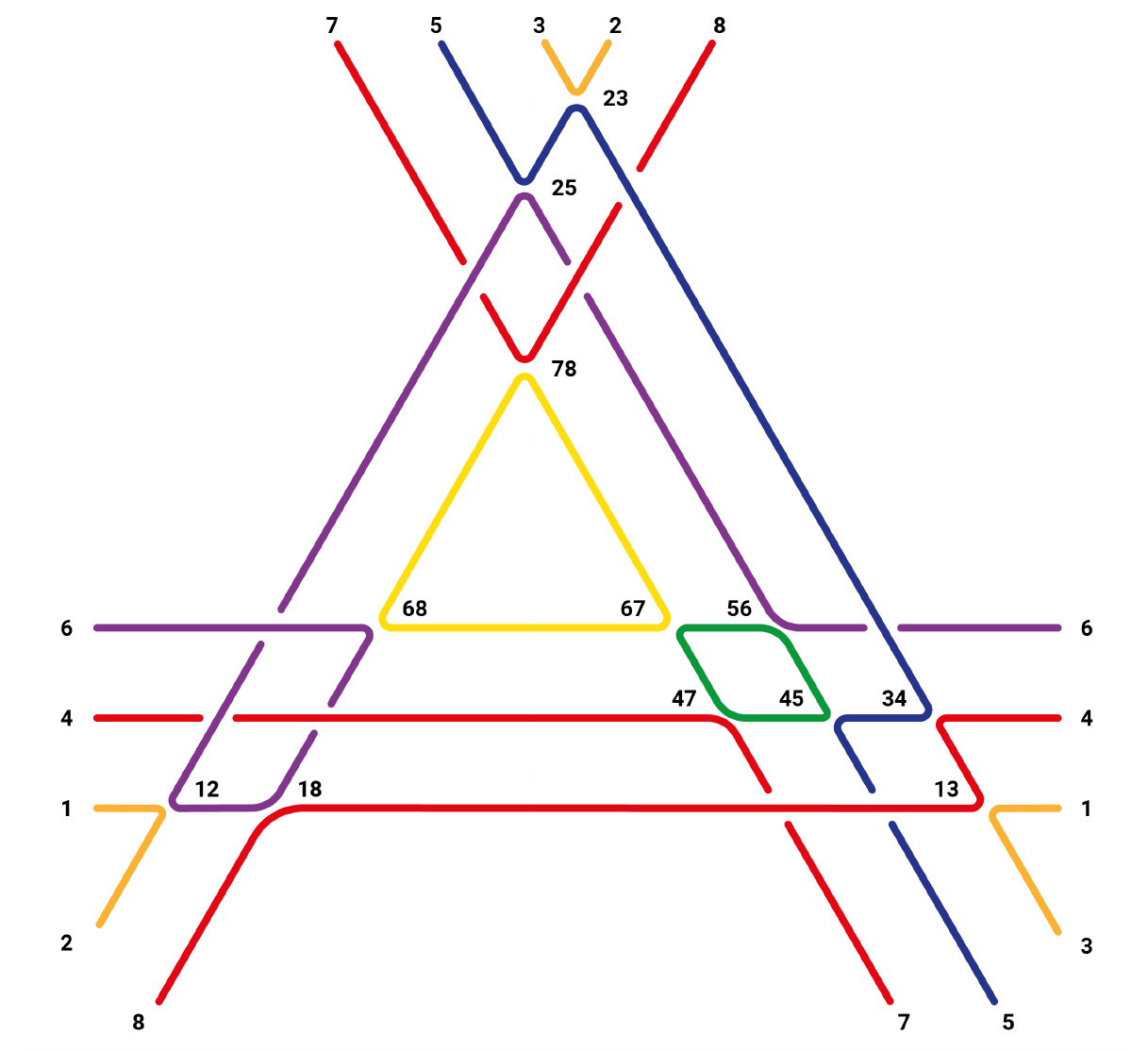} \vspace{-0.17in}
$$
\caption{A genus five graph curve and its deformation to an MM-curve with six ovals.}
\label{fig:G81d}
\end{figure}

Elements of the tangent space ${\rm Hom}(I,S/I)_0 \simeq \RR^{36}$
are represented by vectors in $S^5$. Their coordinates are the
images of the five generators in (\ref{eq:G81ideal}). Here $S = \RR[a,b,c,d,e]$.
The first $24$ elements in the adapted basis are obtained by applying the
Lie algebra of ${\rm PGL}(5)$:
$$ 
 \bigl(0, a(c\!+\!d\!-\!e), a(b\!+\!d), a(d\!-\!e)e, 0 \bigr),\,
\bigl(0, b(c\!+\!d\!-\!e), b(b\!+\!d), b(d\!-\!e)e, 0 \bigr),\,\ldots
$$
The other $12$ basis vectors are computed by one of the methods in Algorithm~\ref{alg:five}.
We find
$$ \! 
\begin{small}
\begin{matrix}
\eta_{12} = (0, 0, 0, -(a-d)^2a, 0) & 
\eta_{13} = (\,0,\, 0,\, 0,\, -a(d-e)^2, 0\,) \\
\eta_{23} \,=\,  (\,0,\, 0,\, 0, \,-ae^2,\, 0\,) &
\eta_{25} =  (\,0,\, 0,\, ae{+}bc{+}bd,\, 0,\, 0\,) \\
\eta_{34} = (0, 0, (d{-}e)(c{+}d), 0, (d{-}e)ce) &
\eta_{45} = (0, (c{+}d{-}e)e, (e{-}c{-}d)(b{+}d), (a{-}c{-}e)e^2, -(c{+}e)(b{+}d)e) \\
\eta_{56} \,=\, (\,(b+d)b,\, 0, \,0, \,0, \,0 \,) &
\eta_{47} = ((b+d)(d-e), 0, 0, 0, 0) \\
\eta_{67} =  (0, 0, (c{+}d)(b{+}d), 0, (b{+}d)ce) &
\eta_{68} \,=\, (\,0,\, 0,\, ae,\, 0,\, -bce\, ) \\
\eta_{78} = (0, 0, 0, 0, (d\!-\!e)^2(c\!+\!d\!-\!e)) &
\eta_{18} = ( 0, (d-e)(a+b), 0, -(d-e)^2 (a+b), 0).
\end{matrix}
\end{small}
$$
For any  $\lambda_{ij} > 0$, 
the following first-order deformation lifts to the equations of an MM-curve:
\begin{equation}
\label{eq:goodluck}
\hbox{the five generators in (\ref{eq:G81ideal}) }\,\, + \,\,
\epsilon \cdot \bigl(\lambda_{12} \eta_{12} + \lambda_{13} \eta_{13} + \cdots + \lambda_{78} \eta_{78}
+ \lambda_{18} \eta_{18} \bigr).
\end{equation}
A sketch of the six ovals of such MM-curve near $C_G$ is shown on the right in Figure \ref{fig:G81d}. We note that our diagram is similar to
 the curve from soliton data shown in
 \cite[Figure 16]{AG19}.
 \end{example}

We implemented Algorithm \ref{alg:five} in the computer 
algebra system {\tt Macaulay2} \cite{M2}.  
The code is made available, together with illustrations for
$g=6,7,8$, at our MathRepo page
$$ \hbox{\url{https://mathrepo.mis.mpg.de/mmcurves/}}. $$
The input for our {\tt Macaulay2} code is the graph $G$.
We write $x_0,x_1,\ldots,x_{g-1}$
for the variables in the polynomial ring $S$ corresponding to $\PP^{g-1}$.
The program computes the $2g-2$ associated primes of the ideal of
the curve $C_G$.
Their labels  $0,1,2,\ldots$ match the vertex labels of $G$.

We next input the desired edge pairing $\rho$. 
This is done by specifying one of the two pairs in $\rho(e)$, say
$\{e_i,e_j\}$ in the notation of (\ref{eq:edgepairing}). The other pair is determined.
In particular, for all planar graphs  $G$, we specify the edge pairing given by the
$g+1$ regions, as in Figure \ref{fig:G81b}.

The program loops over all edges $e$ of $G$ and computes the ideal of the curve $X_t$ in  (\ref{eq:Xt2}).
The computation is carried out over the ring $D$ of dual numbers.
The ideal generators for $X_t$ give rise to the corresponding tangent vector $\eta$.
Namely, for any generator $f$ in the ideal of  $X_0=C_G$, we compute $\eta(f)$
as the coefficient of $\epsilon$ in the corresponding ideal generator for $X_t$.
The correct sign is chosen according to  (\ref{eq:correctsign}).
The program then outputs all vectors~$\eta_e$. 
Alternatively, if only one
interior point in the cone $\RR^{g^2-1} \times \RR^{3g-3}_{> 0}$ is desired,
then it outputs the sum of the vectors $\eta_e$.
This first-order deformation lifts to an MM-curve near $C_G$.

\begin{figure}[h] \vspace{-0.17in}
$$ \!\!\!\!\!\!\!\!\!\!
\includegraphics[width = 8.8cm]{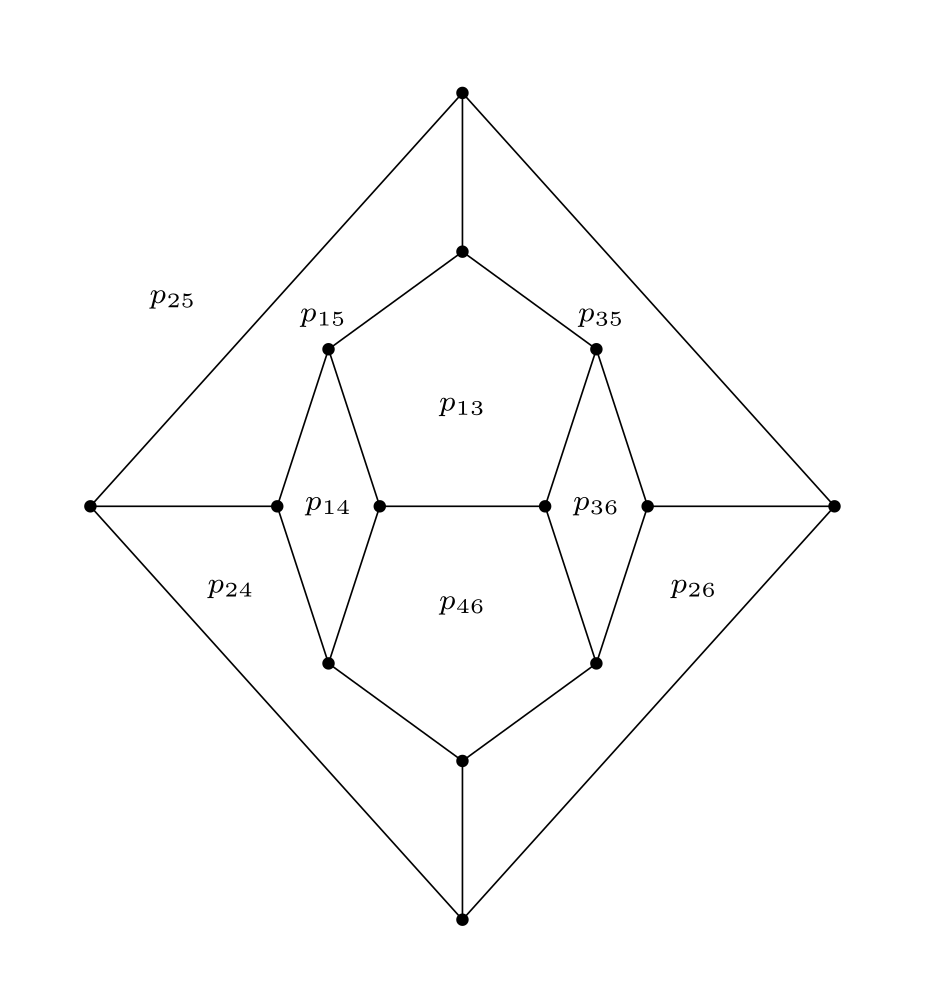} \vspace{-0.2in}
$$
\caption{The associahedron
has $14$ vertices, $21$ edges and $9$ facets, labeled $p_{13},p_{14},\ldots,p_{46}$.}
\label{fig:assoc}
\end{figure}

\begin{example}[Associahedron, $g=8$] \label{ex:asso}
As seen in \cite[Example 5.4]{GHSV},
the following monomial ideal in nine variables $p_{ij}$ 
is an initial ideal of the Pl\"ucker ideal of the Grassmannian $\textrm{Gr}(2,6)$:
\begin{equation} \label{eq:15gens}
 \begin{matrix} I \,\,=\,\,
\langle \, p_{13} p_{24}\,,\,p_{13} p_{25}\,,\,p_{13} p_{26}\,,\,p_{14} p_{25}\,,\,p_{14} p_{26}
\,,\,p_{15} p_{26} \,,\, p_{14} p_{35}\,,\, p_{14} p_{36}, \\ \qquad
        p_{15} p_{36}\,,\,\,p_{15} p_{46}\,,\,\,p_{24} p_{35}\,,\,\,p_{24} p_{36}
        \,,\,\,p_{25} p_{36}\,,\,\,p_{25} p_{46}\,,\,\,p_{35} p_{46} \,\rangle.
        \end{matrix}
\end{equation}
The corresponding simplicial complex is dual to the $3$-dimensional associahedron, see
Figure~\ref{fig:assoc}. The facets of this polytope are
labeled by Pl\"ucker coordinates. The ideal $I$ is generated by
pairs of disjoint facets.
To obtain a curve, we augment $I$ by a general linear form, e.g.
\begin{equation*}
    p_{13}- p_{14}+ p_{15}+ p_{24}- p_{25}+ p_{26}+ p_{35}+ p_{36}- p_{46}.
\end{equation*}
    The resulting variety   is the graph curve $C_G$ where $G$ is the graph in
     Figure~\ref{fig:assoc}.
    Our {\tt Macaulay2} code computes the
 tangent vectors  $\eta_e$ associated to the $21$ edges $e$, and it outputs their sum
    $$ \! 
\begin{footnotesize} \eta\, = \,
\begin{bmatrix}
p_{13} p_{14}-p_{14}^2+p_{14} p_{24}\\ -p_{15} p_{35}\\ -p_{13} p_{36}-p_{26} p_{36}-p_{36}^2\\ p_{15} p_{24}\\ -p_{14} p_{24}+p_{15} p_{24}+p_{24}^2-p_{24} p_{25}+p_{24} p_{26}\\ -p_{24} p_{25}-p_{25} p_{35}\\ -p_{14} p_{15}+p_{15}^2+p_{15} p_{24}+p_{24} p_{25}-p_{25}^2+p_{25} p_{26}+p_{15} p_{35}+p_{25} p_{35}\\ p_{13}^2{-}p_{13} p_{14}{+}p_{14} p_{15}{-}p_{15}^2
{-}p_{15} p_{24}{-}p_{24} p_{25}{+}p_{25}^2{-}p_{25} p_{26}{-}2 p_{15} p_{35}{-}p_{26} p_{35}
{-}p_{35}^2{+}p_{13} p_{36}{-}p_{35} p_{36}\\ -p_{15} p_{35}+p_{25} p_{35}-p_{26} p_{35}-p_{35}^2-p_{35} p_{36}\\ p_{13} p_{14}+p_{14} p_{24}\\ p_{24} p_{25}-p_{25}^2+p_{25} p_{35}\\ -p_{24} p_{26}+p_{25} p_{26}-p_{26}^2-p_{26} p_{35}-p_{26} p_{36}\\ -p_{26} p_{35}\\ p_{24} p_{26}\\ -p_{13} p_{36}-p_{26} p_{36}
\end{bmatrix}\! .
\end{footnotesize}
$$
The $i$th coordinate is the image under $\eta \in {\rm Hom}_S(I,S/I)_0$
of the $i$th generator of the ideal $I$.
Every deformation in direction $\eta$ lifts to an MM-curve 
in $\PP^7_R$ whose special fiber is $C_G$.
\end{example}

We emphasize that the output of Algorithm \ref{alg:five} 
is only a first-order deformation. While our theory guarantees that
this output lifts to an actual MM-curve in $\PP^{g-1}_R$,
at present we do not have a practical
algorithm for computing such liftings.
In special cases, ad hoc techniques can be used to
compute the desired lifting.
We now show such an MM-curve
for Example~\ref{ex:G81}.

\begin{example}[$g=5$, continued] 
\label{ex:G81more}
Let $G$ be the graph   in Figures \ref{fig:G81b} and \ref{fig:G81d}.
We claim that the following  equations in $a,b,c,d,e$ define
an MM-curve in $\PP^4(R)$ with
special fiber $C_G$:

\begin{equation}
\label{eq:limitgen6}
\begin{small}
\begin{matrix}
(c+d)(c+d-e) \,\,+\,\, \epsilon \cdot (b^2+2bd-be+d^2-de) \,\,
+\,\, \epsilon^2 \cdot a^2 & = & 0, \\
a (c+d-e) \,\,+\,\, \epsilon \cdot (ad-ae+bd-be+ce+de-e^2) \,\,-\,\,
\epsilon^2 \cdot e^2 & = & 0 ,\\
a(b+d) \,\,+\,\, \epsilon \cdot (ae+bc+bd+be+cd-ce+d^2) \phantom{dadadadada} & = & 0.
\end{matrix}
\end{small}
\end{equation}
We will show that for small $\epsilon  > 0$, this is a smooth canonical curve
with six ovals in $\PP^4(\RR)$. For $\epsilon = 0$,
the three equations define a union of three planes.
The correct limit  $x_0=\lim_{ \epsilon \to 0} x_\epsilon$ in the Hilbert scheme is 
the point corresponding to the graph curve $C_G$. This is verified by saturating
with respect to $\epsilon$.
The tangent vector $\eta$ corresponding to this flat family equals
$$ \eta \,\,= \,\,
\begin{footnotesize}
\begin{bmatrix}
b^2+2 b d+d^2-b e-d e\\ a d+b d-a e-b e+c e+d e-e^2\\ 
ae + b c+b d+be+ c d-c e+d^2
\\ 
-a^3{+}b^2 c{-}a d^2{-}2 b d^2{+}c d^2{+}d^3{+}b^2 e
{+}3 b d e{-}3 c d e{-}3 d^2 e{-}b e^2{+}c e^2{+}2 d e^2{-}e^3
\\ b^3-c d^2+b^2 e+a d e-b d e+2 c d e-d^2 e-a e^2+d e^2-e^3
\end{bmatrix}\! .
\end{footnotesize}
$$
We express this tangent vector as a linear combination of our adapted basis
in Example \ref{ex:G81}:
\begin{equation*}    \eta \,\,=\,\,
    \eta_{12}+ \eta_{13}+ 2\eta_{23}+ \eta_{25}+ \eta_{34}+ \eta_{45}+ \eta_{56}
    + \eta_{47}+ 2\eta_{67}+ \eta_{68}+ \eta_{78}+ \eta_{18}+ \ldots
\end{equation*}
Here we omitted the basis vectors obtained by applying the Lie algebra of ${\rm PGL}(5)$. 
Since all twelve $\eta_{ij}$ have positive coefficients, 
(\ref{eq:limitgen6}) is an MM-curve in $\PP^4(R)$ with special fiber $C_G$.

We note that the $\epsilon^2$ terms in~(\ref{eq:limitgen6}) are essential. Without them, 
(\ref{eq:limitgen6}) would be equal to
\begin{equation}
\label{eq:badguess}
\hbox{the three quadrics in (\ref{eq:G81ideal}) }\,\, + \,\,
\epsilon \cdot \bigl(\eta_{12} + \eta_{13} + \cdots + \eta_{78}
+ \eta_{18} \bigr) \, .
\end{equation}
This naive linear deformation
cuts out a singular curve.
This is not the lift of any tangent vector in the MM-cone. The tangent vector $\eta$ corresponding to our flat family (\ref{eq:badguess}) equals
\begin{equation*}    \eta \,\,=\,\,
    \eta_{25}+ \eta_{34}+ \eta_{56}
    + \eta_{47}+ \eta_{67}+ \eta_{68}+ \eta_{78}+ \eta_{18}+ \ldots.
\end{equation*}
This lies in the boundary of our cone since $\eta_{12}, \eta_{13}, \eta_{23}$ and $\eta_{45}$ appear with coefficient zero.
\end{example}

\bigskip

\bigskip

\noindent {\bf Acknowledgment.}
 We are grateful to
Gavril Farkas, Stefan Felsner, Alex Fink, Daniel Kral and Martin Ulirsch
for helping us by answering our mathematical questions. We also thank Jonathan Hauenstein and Ben Hollering for their assistance with computations.

\bigskip

                \bigskip
                \bigskip

                \footnotesize
                \noindent {\bf Authors' addresses:}

                \smallskip

                \noindent Mario Kummer, TU Dresden,
                		\hfill \url{mario.kummer@tu-dresden.de}

                \noindent  Bernd Sturmfels, MPI-MiS Leipzig
                \hfill \url{bernd@mis.mpg.de}

                \noindent Raluca Vlad, Brown University
                \hfill \url{raluca_vlad@brown.edu}

 \end{document}